\documentclass[12pt]{amsart}

\setlength{\textwidth}{16.5cm}\oddsidemargin=-0.5cm\evensidemargin=-0.5cm
\setlength{\textheight}{22cm}

\usepackage{amsmath}

\usepackage{amsthm}
\usepackage{graphicx}
\usepackage{lscape}
\usepackage[all]{xy}

\usepackage{epsfig}

\usepackage{amssymb}
\usepackage{latexsym}

\newtheorem{Theorem}{Theorem}
\newtheorem{Proposition}{Proposition}
\newtheorem{Lemma}{Lemma}
\newtheorem{Corollary}{Corollary}

\newtheorem{Claim}{Claim}


\theoremstyle{definition}

\newtheorem{remark}{Remark}
\newtheorem{Example}{Example}

\def\OO{{\mathcal O}}

\def\SO4{{\mathrm{SO(4)}}}
\def\O4{{\mathrm{O(4)}}}
\def\mod{{\mathrm{mod}}}

\def\C{{\mathbb C}}
\def\R{{\mathbb R}}
\def\H{{\mathbb H}}
\def\Z{{\mathbb Z}}

\begin{document}

\title{Fibered spherical 3-orbifolds}

\author[M. Mecchia]{Mattia Mecchia*}
\address{M. Mecchia: Dipartimento Di Matematica e Geoscienze, Universit\`{a} degli Studi di Trieste, Via Valerio 12/1, 34127, Trieste, Italy.} \email{mmecchia@units.it}
\author[A. Seppi]{Andrea Seppi**}
\address{A. Seppi: Dipartimento di Matematica ``Felice Casorati", Universit\`{a} degli Studi di Pavia, Via Ferrata 1, 27100, Pavia, Italy.} \email{andrea.seppi01@ateneopv.it}
\subjclass[2010]{57M60, 57M12, 57M50}


\keywords{Spherical 3-orbifolds; Seifert fibrations; finite subgroups of $\SO4$}

\thanks{$^{*}$Partially supported by the FRA 2011 grant  ``Geometria e topologia delle variet\`{a}'', Universit\`{a} degli Studi di Trieste, and by the  PRIN 2010-2011 grant ``Variet\`{a} reali e complesse: geometria, topologia e analisi armonica''.}

\thanks{$^{**}$Partially supported by the FIRB 2011-2014 grant ``Geometry and topology of low-dimensional manifolds''.}

\begin{abstract}
In early 1930s Seifert and Threlfall classified up to conjugacy  the finite subgroups of $\SO4$, which gives an algebraic classification of orientable spherical 3-orbifolds.
For the most part, spherical 3-orbifolds are  Seifert fibered. The  underlying topological space and singular set of non-fibered spherical 3-orbifolds were described by Dunbar. In this paper we deal with the fibered case and in particular we give  explicit  formulae  relating the  finite subgroups of $\SO4$ with the invariants of the corresponding fibered 3-orbifolds. This allows us to deduce directly from the algebraic classification topological properties of spherical 3-orbifolds.   
\end{abstract}

%
%

\maketitle

\section{Introduction}

Geometric 3-manifolds and 3-orbifolds play an important role in the  geometrization program of Thurston (completed  at the beginning of this century by Perelman).

Roughly speaking, a n-orbifold is a Hausdorff topological space locally modelled by quotients of $\mathbb{R}^n$ by finite groups of isometries.  
To each point $x$ of the orbifold is associated the minimal group $\Gamma_x$ such that $x$ has a neighborhood modelled on $\R^n/\Gamma_x.$
If $\Gamma_x$ is non-trivial the point is called singular. Complete geometric orbifolds are orbifolds diffeomorphic to   the quotient of a geometric space  (e.g. spherical, Euclidean and hyperbolic space) by a discrete groups of isometries. In particular an orientable  spherical 3-orbifold is a  quotient of $S^3$ by a finite subgroup of $\SO4$. For basic definitions about orbifolds see for example \cite{BMP}.

In early 1930s Seifert and Threlfall classified up to conjugacy  the finite subgroups of $\SO4$ (\cite{TSe1} and \cite{TSe2}). 
The standard reference in English has been  the book of Du Val \cite{DV}, where the groups are divided in families and enumerated.
The classification of finite subgroups of $\SO4$ gives  immediately an algebraic classification of spherical 3-orbifolds, but 
from a topological point of view this classification is not completely satisfactory because it does not give any direct information about the topological structure of the orbifold (underlying topological space and singular set). 

W.D. Dunbar wrote two classical papers about geometric 3-orbifolds. In  \cite{Dun2} he classified the Seifert fibered geometric 3-orbifolds with underlying topological space $S^3$ in terms of the invariants of the fibration and  the singular set of these orbifolds was explicitly drawn. In  \cite{Dun3},  he described the topology of a non-fibered spherical 3-orbifold starting from the corresponding finite subgroup of $\SO4$. Up to conjugacy the groups giving a non-fibered 3-orbifold are 18, the remaining groups (that are collected in 24 infinite families) leave invariant a fibration of $S^3$ which induces a fibration on the 3-orbifold. In particular all  these groups preserve, up to conjugacy, the Hopf fibration of $S^3$.

In \cite{Dun3}  Dunbar wrote about fibered orbifolds: ``...these orbifolds are ame\-nable to study {\it en masse}, although in practice there are so many cases that it is hard to give a formula that will translate a description of a subgroup of $\SO4$ into a description of the corresponding orbifold (or vice versa)."

In this paper we do exactly this, for each family of groups preserving a fibration  we describe an explicit formula giving the invariants of the fibered quotient 3-orbifold. The fibration is that induced by the Hopf fibration. This permits  to deduce directly  from the group presentation in \cite{DV} some   topological information about the quotient orbifold. The results are collected in Tables~\ref{pluto},  \ref{pippo} and \ref{topolino}.

In Section~\ref{algebraic-classification} we review briefly the main ideas of the algebraic classification of the finite subgroups of $\SO4$. It is interesting to point out that Du Val's list is not complete and three new families of groups have to be inserted. 
In their book about quaternions and octonions \cite{CS}, J.H.Conway and D.A.Smith  have revisited  the classification,  giving a    complete list of the finite subgroups of $\SO4$. They do not use the same   notation of  Du Val; for details they often refer  to  Goursat's paper \cite{G}. 
Here we choose to use the same notation and enumeration as in Du Val's book, inserting the new three families of groups. In particular we describe  how the new families appear in the Du Val procedure of classification. 

In Section~\ref{Seifert orbifold} we recall briefly  the basic notions about Seifert fibered 3-orbifolds and analyze which groups in Du Val's list leave invariant a fibration of $S^3$.

Section~\ref{abelian}  is mainly dedicated to the analysis of the abelian groups (Families 1 and $1'$). From an algebraic point of view this seems the simplest case but,  using the presentation of the groups used in the classification,  the geometry of the action  appears quite obscure. For example it is not easy to understand which transformations have non-empty fixed point set. In fact to compute  the index of the singular points in the quotients we have to distinguish many cases  according to the parity of the four indices involved. 
For the abelian groups we compute explicitly the underlying topological space that is in every case a lens space. The singular set is contained in the union of the cores of the tori giving the lens space; the singularity indices of the two components  is explicitly given.  
The generalized dihedral case (Families 11 and $11'$) is a direct consequence of the abelian one. Here the topology of the quotient orbifold can be completely described by using  invariants and Proposition 2.10 in \cite{Dun1}. The formulae for these groups  are presented in Tables~\ref{pluto} and  \ref{pippo}. 

The remaining cases can be all analyzed by starting from the results obtained in Section~\ref{abelian} and by using a  procedure which is similar in each case.
In Section~\ref{remaining} we present  explicitly the most interesting families and we describe the general procedure. The formulae for  these  families are presented in Table~\ref{topolino}.  

In Subsection~\ref{last} we explain how it is possible, by the methods presented in \cite{Dun1}, to obtain in general  the singular set and the underlying topological space of the quotient orbifold from the invariants of the fibration. Here we treat the example of Family 2. For these groups we explicitly describe two different fibrations (with different base orbifolds) of the quotient orbifold. One of them derives from the Seifert fibration of the underlying manifold but the other does not.

\section{Classification of finite subgroups of $\SO4$}\label{algebraic-classification}

Let    $\H=\{a+bi+cj+dk\,|\,a,b,c,d\in\R\}=\{z_1+z_2j\,|\,z_1,z_2\in\C\}$ be the quaternion algebra.
In this section  we consider  the  3-sphere  as the set of unit quaternions (the quaternions of length 1):

$$
S^3=\{a+bi+cj+dk \,|\, a^2+b^2+c^2+d^2=1\}=\{z_1+z_2j\, \,|\, |z_1|^2+|z_2|^2=1\}
$$

The product in $\mathbb{H}$ induces a group structure on $S^3$.

\medskip

For each pair $(p,q)$ of elements of $S^3$,  the  function $\Phi_{p,q}:\H \rightarrow \H$  with $\Phi_{p,q}(h)=phq^{-1}$  leaves invariant the  length of  quaternions, thus  we can define  a homomorphism of groups $\Phi:S^3\times S^3\rightarrow \SO4 $ such that $\Phi(p,q)=\Phi_{p,q}$. The homomorphism can be proved to be surjective and the kernel of $\Phi$ is $\{(1,1),\,(-1,-1)\}$.
The homomorphism $\Phi$ gives a  1-1 correspondence between  finite subgroups of $\SO4$ and finite subgroups of $S^3\times S^3$ containing the kernel of $\Phi$. 
Moreover if two subgroups are conjugate in $\SO4$, then the corresponding groups in $S^3\times S^3$ are conjugate and vice versa.   
So to give a classification up to conjugation of the finite subgroups of $\SO4$, we consider the subgroups of $S^3\times S^3$. 

Let  $G$ be  a finite subgroup of $S^3\times S^3$, we denote by  $\pi_i:S^3\times S^3 \rightarrow S^3$ with $i=1,2$ the two projections. We use the following notations:  $L=\pi_1(G)$, $L_K=\pi_1((S^3\times\{1\})\cap G)$, $R=\pi_2(G)$, $R_K=\pi_2((\{1\}\times S^3)\cap G)$. The projection $\pi_1$ induces an isomorphism   $\bar{\pi}_1: G/(L_K\times R_K)\rightarrow L/L_K$ and $\pi_2$ induces an isomorphism   $\bar{\pi}_2: G/(L_K\times R_K)\rightarrow R/R_K$, we denote by $\phi_G$ the isomorphism between $L/L_K$ and $R/R_K$ obtained by composing  $\bar{\pi}_1^{-1}$ and $\bar{\pi}_2$. On the other hand if we consider $L$ and $R$,  two finite subgroups of $S^3$, with two normal subgroups $L_K$ and $R_K$ such that there exists an isomorphism $\phi:L/L_K\rightarrow R/R_K$, we can define a subgroup $G$ of $S^3\times S^3$ such that $L=\pi_1(G)$, $L_K=\pi_1((S^3\times\{1\})\cap G)$, $R=\pi_2(G)$, $R_K=\pi_2((\{1\}\times S^3)\cap G)$ and $\phi=\phi_G$.
The groups of $S^3\times S^3$ can be uniquely identified by 5-tuples $(L,L_K,R,R_K,\phi)$. The order of the group identified by $(L,L_K,R,R_K,\phi)$ is  $(|L|\cdot|R|)/|\,L/L_K|$, i.e. the product of the orders of $L$ and $R$ divided by the order of $L/L_K\cong R/R_K.$

We are interested  in the classification up to conjugacy and we use the following proposition (it is implicitly used in \cite{DV} and the proof is straightforward) :

\begin{Proposition}\label{classificationS3}
Let $G$ and $G'$ two groups of $S^3\times S^3$ described respectively by $(L,L_K,R,R_K,\phi)$ and  $(L',L'_K,R',R'_K,\phi')$.
The groups $G$ and $G'$ are conjugate in $S^3\times S^3$ if and only if there exist  two inner automorphisms, $\alpha$ and $\beta$, of $S^3$ such that $\alpha(L)=L'$, $\beta(R)=R'$, $\alpha(L_K)=L'_K$, $\beta(R_K)=R'_K$ and $\phi=\bar{\beta}^{-1}\phi'\bar{\alpha}$, where $\bar{\alpha}$ and $\bar{\beta}$ are the maps induced by $\alpha$ and $\beta$ on the factors $L/L_K$ and $R/R_K$.    
\end{Proposition}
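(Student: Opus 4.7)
The plan is to exploit the fact that conjugation in $S^3 \times S^3$ acts componentwise, so every conjugation by $(p,q)$ restricts to a pair of inner automorphisms of $S^3$, one on each factor. The 5-tuple data of $G$ is defined entirely in terms of the projections $\pi_1,\pi_2$ and the intersections of $G$ with the coordinate subgroups, so one can track how each ingredient transforms. Combined with the (already noted) uniqueness of the correspondence between groups and 5-tuples, both directions of the equivalence will follow from a direct comparison.

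For the forward implication, suppose $G' = (p,q)\, G\, (p,q)^{-1}$, and let $\alpha(x)=pxp^{-1}$, $\beta(x)=qxq^{-1}$. Then
\[
L' = \pi_1(G') = \alpha(L), \qquad R' = \pi_2(G') = \beta(R),
\]
and since $(a,1)\in G'$ iff $(p^{-1}ap,1)\in G$, we obtain $L'_K = \alpha(L_K)$, and analogously $R'_K = \beta(R_K)$. For the isomorphism condition, if $(x,y)\in G$ represents $\phi_G(xL_K) = yR_K$, then $(\alpha(x),\beta(y))\in G'$ witnesses $\phi_{G'}(\alpha(x)L'_K) = \beta(y)R'_K$. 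In the induced quotient maps $\bar\alpha\colon L/L_K \to L'/L'_K$ and $\bar\beta\colon R/R_K \to R'/R'_K$, this reads
\[
\phi_{G'}\circ \bar\alpha \;=\; \bar\beta\circ \phi_G,
\]
which rearranges to the stated identity $\phi_G = \bar\beta^{-1}\phi_{G'}\bar\alpha$, with $\phi'=\phi_{G'}$.

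For the converse, given $\alpha,\beta$ inner automorphisms of $S^3$ realized by elements $p,q\in S^3$ and satisfying the four set-theoretic equalities plus the compatibility $\phi = \bar\beta^{-1}\phi'\bar\alpha$, I would form the conjugate $G'' := (p,q)\,G\,(p,q)^{-1}$. By the forward computation, the 5-tuple of $G''$ equals
\[
\bigl(\alpha(L),\alpha(L_K),\beta(R),\beta(R_K),\bar\beta\,\phi_G\,\bar\alpha^{-1}\bigr)
= (L',L'_K,R',R'_K,\phi'),
\]
i.e., the same 5-tuple as $G'$. By the uniqueness of the correspondence between finite subgroups of $S^3\times S^3$ and their 5-tuples (recalled just before the statement), we conclude $G''=G'$, so $G$ and $G'$ are conjugate.

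The main obstacle is purely bookkeeping: keeping track of the direction of the composition defining $\phi$ (via $\bar\pi_1^{-1}\bar\pi_2$) under the componentwise conjugation, so that the final relation $\phi = \bar\beta^{-1}\phi'\bar\alpha$ comes out on the correct side. Once this is handled, the rest is a direct verification using only the definitions of the projections $\pi_1,\pi_2$ and the subgroups $L_K, R_K$.
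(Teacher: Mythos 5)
Your proof is correct and is precisely the ``straightforward'' verification the paper alludes to without writing out: conjugation in $S^3\times S^3$ by $(p,q)$ acts componentwise, so it transforms each entry of the Goursat 5-tuple by $\alpha=\mathrm{conj}_p$ or $\beta=\mathrm{conj}_q$, and the converse follows from the bijection between subgroups and 5-tuples stated just before the proposition. Both directions, including the direction-of-composition bookkeeping yielding $\phi=\bar\beta^{-1}\phi'\bar\alpha$, check out.
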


Up to conjugacy the finite subgroups of $S^3$ are the following:

$$
\begin{array}  {rll}
C_n=& \{\cos(\frac{2\alpha\pi}{n})+i\sin(\frac{2\alpha\pi}{n}) \,|\,\alpha=0,\dots,n-1\}& \quad (n\geq 1)\\[5pt]
D^*_{4n}=& C_{2n}\cup C_{2n} j & \quad  (n\geq 2)\\[5pt]
T^*=&\cup_{r=0}^2 (\frac{1}{2}+\frac{1}{2}i+\frac{1}{2}j+\frac{1}{2}k)^r D^*_{8}&\\[5pt]
O^*=&T^*\cup (\sqrt{\frac{1}{2}}+\sqrt{\frac{1}{2}}j) T^*&\\[5pt]
I^*=&\cup_{r=0}^4 \left(\frac{1}{2}\tau^{-1}+\frac{1}{2}\tau j+\frac{1}{2} k\right)^r T^* & \quad \left(\tau=\frac{\sqrt{5}+1}{2}\right)\\[5pt]
\end{array}
$$

The group $C_n$ is cyclic of order $n$. 
The group $D_{4n}^*$ is a generalized quaternion group of order $4n$. The group $D_{4n}^*$ is called also binary dihedral and it is a central extension of the dihedral group by a group of order 2. 
The groups $T^*$, $O^*$ and $I^*$ are central extensions of the  tetrahedral, octahedral and icosahedral group, respectively, by a group of order two; they are called  binary tetrahedral, octahedral and icosahedral, respectively.

Analyzing the subgroups of the  groups in this list   and using  Proposition~\ref{classificationS3}, a classification  up to conjugation of the subgroups  of $S^3\times S^3$  containing $(-1,-1)$ can be given. We report these  groups in  Table~\ref{subgroup}. 
We remark that here we use the same enumeration of Du Val's list and almost the same notation. We decide for example to index a binary dihedral group with its order, while in \cite{DV} the group \textbf{D}$_n$ has order $4n.$ Another difference concerns the indices for the families  $1^\prime$ and $11^{\prime}$: what here is $r$  would be $2r$ in \cite{DV}.

\medskip

\begin{table}
\begin{tabular}{|l|c|c|c|}
\hline
 & family of groups & order of $\Phi(G)$ & \\
\hline
 1. & $(C_{2mr}/C_{2m},C_{2nr}/C_{2n})_s$ & $2mnr$ & $\gcd(s,r)=1$ \\  
 $1^{\prime}$. & $(C_{mr}/C_{m},C_{nr}/C_{n})_s$ & $(mnr)/2$ & $\gcd(s,r)=1$ $\gcd(2,n)=1$ \\
&&& $\gcd(2,m)=1$  $\gcd(2,r)=2$\\
 2. & $(C_{2m}/C_{2m},D^*_{4n}/D^*_{4n})$ & $4mn$ &  \\ 
 3. & $(C_{4m}/C_{2m},D^*_{4n}/C_{2n})$ & $4mn$ &   \\ 
 4. & $(C_{4m}/C_{2m},D^*_{8n}/D^*_{4n})$ & $8mn$ &   \\ 
 5. & $(C_{2m}/C_{2m},T^*/T^*)$ & $24m$ &   \\
 6. & $(C_{6m}/C_{2m},T^*/D^*_{8})$ & $24m$ &   \\ 
 7. & $(C_{2m}/C_{2m},O^*/O^*)$ & $48m$ &   \\
 8. & $(C_{4m}/C_{2m},O^*/T^*)$ & $48m$ &   \\ 
 9. & $(C_{2m}/C_{2m},I^*/I^*)$ & $120m$ &   \\ 
 10. & $(D^*_{4m}/D^*_{4m},D^*_{4n}/D^*_{4n})$ & $8mn$ &   \\
 11. & $(D^*_{4mr}/C_{2m},D^*_{4nr}/C_{2n})_s$ & $4mnr$ & $\gcd(s,r)=1$ \\ 
 $11^{\prime}$. & $(D^*_{2mr}/C_{m},D^*_{2nr}/C_{n})_s$ & $mnr$ &  $\gcd(s,r)=1$ $\gcd(2,n)=1$  \\
&&&  $\gcd(2,m)=1$ $\gcd(2,r)=2$\\
 12. &  $(D^*_{8m}/D^*_{4m},D^*_{8n}/D^*_{4n})$ & $16mn$ & \\
 13. &  $(D^*_{8m}/D^*_{4m},D^*_{4n}/C_{2n})$ & $8mn$ & \\
 14. &  $(D^*_{4m}/D^*_{4m},T^*/T^*)$ & $48m$ & \\
 15. &  $(D^*_{4m}/D^*_{4m},O^*/O^*)$ & $96m$ & \\
16. &  $(D^*_{4m}/C_{2m},O^*/T^*)$ & $48m$ & \\
17. &  $(D^*_{8m}/D^*_{4m},O^*/T^*)$ & $96m$ & \\
18. & $(D^*_{12m}/C_{2m},O^*/D^*_{8})$ & $48m$ & \\
19. & $(D^*_{4m}/D^*_{4m},I^*/I^*)$ & $240m$ & \\
20. & $(T^*/T^*,T^*/T^*)$ & $288$ & \\
21. & $(T^*/C_2,T^*/C_2)$ & $24$ & \\
$21^{\prime}.$ & $(T^*/C_1,T^*/C_1)$ & $12$ & \\
22. & $(T^*/D^*_{8},T^*/D^*_{8})$ & $96$ & \\
23. & $(T^*/T^*,O^*/O^*)$ & $576$ & \\
24. & $(T^*/T^*,I^*/I^*)$ & $1440$ & \\
25. & $(O^*/O^*,O^*/O^*)$ & $1152$ & \\
26. & $(O^*/C_2,O^*/C_2)$ & $48$ & \\
$26^{\prime}.$ & $(O^*/C_1,O^*/C_1)_{Id}$ & $24$ & \\
$26^{\prime\prime}.$ & $(O^*/C_1,O^*/C_1)_f$ & $24$ & \\
27. & $(O^*/D^*_{8},O^*/D^*_{8})$ & $192$ & \\
28. & $(O^*/T^*,O^*/T^*)$ & $576$ & \\
29. & $(O^*/O^*,I^*/I^*)$ & $2880$ & \\
30. &   $(I^*/I^*,I^*/I^*)$ & $7200$ & \\
31. &   $(I^*/C_2,I^*/C_2)_{Id}$ & $120$ & \\
$31^{\prime}.$ & $(I^*/C_1,I^*/C_1)_{Id}$ & $60$ & \\
32. &   $(I^*/C_2,I^*/C_2)_{f}$ & $120$ & \\
$32^{\prime}.$ & $(I^*/C_1,I^*/C_1)_f$ & $60$ & \\
33. & $(D^*_{8m}/C_{2m},D^*_{8n}/C_{2n})_f$ & $8mn$ &   $m\neq 1$  $n\neq 1$. \\ 
 $33^{\prime}$. & $(D^*_{8m}/C_{m},D^*_{8n}/C_{n})_f$ & $4mn$ &  $\gcd(2,n)=1 \gcd(2,m)=1$ \\
&&& $m\neq 1$ and $n\neq 1$.   \\
34. &  $(C_{4m}/C_{m},D^*_{4n}/C_{n})$ & $2mn$ & $\gcd(2,n)=1 \gcd(2,m)=1$ \\
\hline
\end{tabular}
\bigskip
\caption{Finite subgroups of $\SO4$}
\label{subgroup}
\end{table}

To be read the table needs some remarks:

\medskip

1. For most cases  the group is completely determined up to conjugacy  by the first four data in the 5-tuple  $(L,L_K,R,R_K,\phi)$, since any possible  isomorphism $\phi$  gives  the same group up to  conjugacy. So  we use   Du Val's notation where the group $(L,L_K,R,R_K,\phi)$ is denoted by $(L/L_K, R/R_K)$, using a subscript  only when the isomorphism has to be specified, as is the case for Families $1,\, 1^\prime,\,11,\, 11^\prime,\, 26^\prime,\, 26^{\prime\prime},\,31,\, 31^\prime,\,32,\, 32^\prime,\,33$ and  $33^\prime$. 
We recall that $\phi$ is an isomorphism  from $L/L_K$ to $R/R_K$.
In the group  $(C_{2mr}/C_{2m},C_{2nr}/C_{2n})_s$ the isomorphism is $\phi_s$ sending the element $(\cos(\pi/mr)+i \sin(\pi/mr))C_{2m}$ to $(\cos(\pi/nr)+i \sin(\pi/nr))^sC_{2n}$. In the group $(C_{mr}/C_{m},C_{nr}/C_{n})_s$ the situation is similar and the isomorphism is $\phi_s$ sending $(\cos(2\pi/mr)+i \sin(2\pi/mr))C_{m}$ to $(\cos(2\pi/nr)+i \sin(2\pi/nr))^sC_{n}$.

For Family 11 (resp. $11^\prime$) we extend the isomorphism $\phi_s$ by  sending the coset $jC_{2m}$ to $jC_{2n}$ (resp. $jC_m$ to $jC_n$).
If $L=D^*_{4mr}$, $R=D^*_{4nr}$, $L_K=C_{2m}$ and $R_K=C_{2n}$, then these isomorphisms cover all the possible cases except when $r=2$. 
In this case we have to consider another isomorphism $f:D^*_{4mr}/C_{2m}\rightarrow D^*_{4nr}/C_{2n}$ such that: 

$$ f((\cos(\pi/2m)+i \sin(\pi/2m) )C_ {2m})= jC_{2n} $$

$$ f(j C_ {2m})=(\cos(\pi/2n)+i \sin(\pi/2n))C_{2n} $$

This is due to the fact that, if $r>2$, the  quotients $L/L_K$ and $R/R_K$ are isomorphic to a dihedral groups of order greater then four where  the index-two cyclic subgroup is characteristic, while if $r=2$ the quotients are dihedral groups of order four and  extra isomorphisms appear.
The isomorphism $f$ gives another family of groups (number 33 in our 
list), which was missing  in Du Val's list. 

In Family $11^{\prime}$, the behavior is similar: if $r>2$, the isomorphisms $\phi_s$ give all the possible groups up to conjugacy; if $r=2$ the quotients are quaternion groups of order 8 and a further  family has to be considered. This is the second missing case in \cite{DV} and Family $33^{\prime}$ in  our list, where  $f$ is the following isomorphism:

$$ f((\cos(\pi/m)+i \sin(\pi/m) )C_ {m})= jC_{n} $$

$$ f(j C_ {m})=(\cos(\pi/n)+i \sin(\pi/n))C_{n}. $$

Families 33 and $33^{\prime}$ are listed in Table 4.2 of \cite{CS} as $\pm \frac{1}{4} [D_{4m}\times \overline{D}_{4n}]$ and $
\frac{1}{4} [D_{4m}\times \overline{D}_{4n}]$, respectively; their absence from Du Val's list was also  pointed out  in \cite{Dun-et-al}.

The groups in  Families $26^\prime,\, 26^{\prime\prime},\,31,\, 31^\prime,\,32,\, 32^\prime$ do not leave invariant any fibration of $S^3$, so we do not need an explicit description of the isomorphism in the  5-tuple.   More details can be found in \cite{DV} and in \cite{Dun3}.

2. The third family of groups not in  Du Val's list is Family 34 in Table~\ref{subgroup}. Note that $D^*_{4n}/C_{n}$ is cyclic of order 4 if and only if $n$ is odd. If $m$ is even while $n$ is odd, then $(C_{4m}/C_m,D^*_{4n}/C_n)$ does not contain  the kernel of $\Phi$, but if $m$ is odd, a new family is generated.  This family (after conjugation  by a reflection, and with the roles of $m$ and $n$ swapped) appears in Table 4.1 of \cite{CS} as $+\frac{1}{2} [D_{2m}\times C_{2n}]$.

\medskip

3. By Proposition~\ref{classificationS3}   the groups $(L,L_K,R,R_K,\phi)$ and $(R,R_K,L,L_K,\phi^{-1})$ are not conjugate unless $L$ and
$R$ are conjugate in $S^3$, so the corresponding groups in $\SO4$ are in general not conjugate in $\SO4$. 
If we consider conjugation in  $\O4$ the situation changes, because the orientation-reversing isometry of $S^3$, sending each quaternion $z_1+z_2 j$ to its inverse $\overline{z_1}-z_2j$, conjugates the two subgroups of $\SO4$ corresponding to $(L,L_K,R,R_K,\phi)$ and $(R,R_K,L,L_K,\phi^{-1})$. For this reason, in Table~\ref{subgroup} only one family between  $(L,L_K,R,R_K,\phi)$ and $(R,R_K,L,L_K,\phi^{-1})$ is listed.

\section{Seifert orbifolds}\label{Seifert orbifold}

We will describe the extension to 3-orbifolds of the concept of circle bundles over a surface, starting with brief general descriptions of 2-orbifolds and orientable 3-orbifolds. All 3-orbifolds considered in this paper will be orientable. For more details, see Chapter 2 of \cite{BMP}.

The underlying topological space of a 2-orbifold is a 2-manifold with boundary. If $x$ is a singular point,   a neighborhood of $x$  is modelled by $D^2/\Gamma$ where $\Gamma$ is a non-trivial finite group of isometries acting on $D^2$ fixing the preimage of $x$ in $D^2$. $\Gamma$ is also called the local group of $x$. The local group  can be a cyclic group of rotations ($x$ is called a cone point), a group of order 2 generated by a reflection ($x$ is a  mirror reflector) or a dihedral group  generated by an index 2 subgroup of rotations and a reflection  (in this case $x$ is called a corner reflector). The local models are presented in Figure~\ref{lm2o}, a cone point or a corner reflector is labelled by its singularity index, i.e. an integer corresponding   to the order of the subgroup of rotations in $\Gamma$. We remark that the  boundary of the underlying topological space consists of mirror reflectors and corner reflectors, and the singular set might contain  in addition some   isolated points corresponding to cone points. 
If $X$ is a 2-manifold  without boundary we denote by $X(n_1,\dots,n_k)$ the 2-orbifold with underlying topological space $X$ and with $k$  cone points  of singularity index  $n_1,\ldots,n_k$. If $X$ is a 2-manifold with  non-empty connected boundary  we denote by $X(n_1,\dots,n_k;m_1,\dots,m_h)$ the  2-orbifold with $k$ cone points of singularity index $n_1,\ldots,n_k$ and with $h$ corner reflectors of singularity index $m_1,\ldots,m_h$. All of our 2-orbifolds are covered by $S^2$, implying  $h\leq 3$; therefore all orderings of the corner reflectors are equivalent.

\begin{figure}[htb]
\begin{center}
\includegraphics[height=3.5cm]{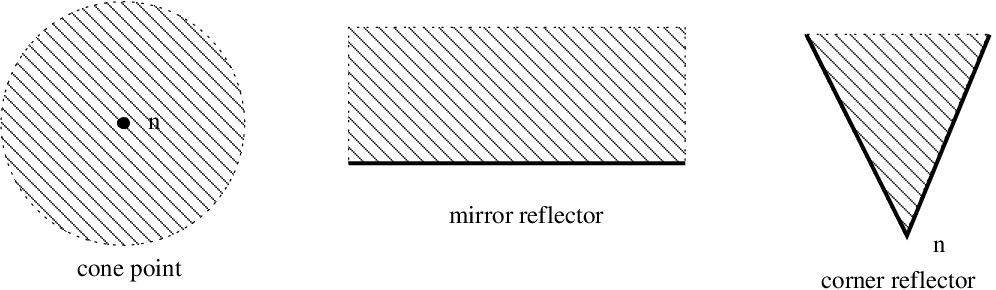}
\caption{Local models of 2-orbifolds}\label{lm2o}
\end{center}
\end{figure}

The underlying topological space of an  orientable 3-orbifold is a 3-manifold and the singular set is a trivalent graph. 
The local models are represented in Figure~\ref{lm3o}. Excluding the vertices of the graph, the local group of  a singular point  is cyclic;  an edge of the graph is  labelled by its singularity index, that is the order of the related cyclic local groups.

\begin{figure}[htb]
\begin{center}
\includegraphics[height=1.8cm]{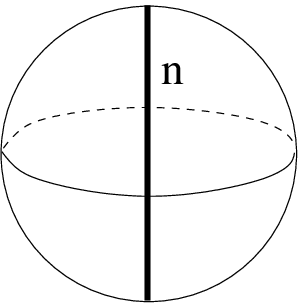}\hspace{0.7cm}\includegraphics[height=1.8cm]{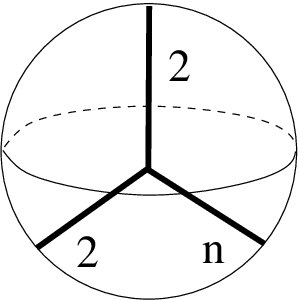}\hspace{0.7cm}\includegraphics[height=1.8cm]{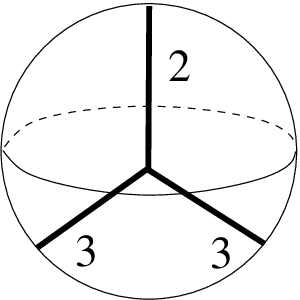}\hspace{0.7cm}\includegraphics[height=1.8cm]{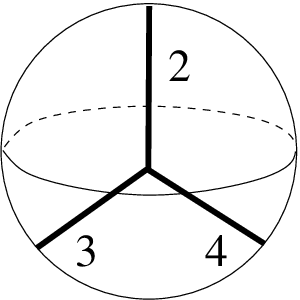}\hspace{0.7cm}\includegraphics[height=1.8cm]{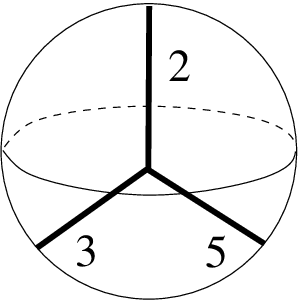}
\caption{Local models of 3-orbifolds}\label{lm3o}
\end{center}
\end{figure}

A Seifert fibration of a 3-orbifold $O$ consists of a projection $p:O\rightarrow B$, where $B$ is a 2-dimensional orbifold, such that for every point $x\in B$ there is an orbifold chart $x\in U\cong \tilde U/\Gamma$, an action of $\Gamma$ on $S^1$ (inducing a diagonal action of $\Gamma$ on  $\tilde U\times S^1$)  and a diffeomorphism $\psi:(\tilde U\times S^1)/\Gamma\rightarrow p^{-1}(U)$  which makes the following diagram commute:

\[
\xymatrix{
p^{-1}(U) \ar[dr]_-{p} & & (\tilde{U}\times S^1)/ \Gamma \ar[ll]_-{\psi} \ar[dl] & \tilde{U}\times S^1 \ar[l] \ar[dl]^-{\mbox{pr}_1} \\
 & U\cong\tilde{U}/ \Gamma  &  \tilde{U} \ar[l] &
}
\]

If we restrict our attention to orientable 3-orbifolds $O$, then the action of $\Gamma$ on $\tilde U\times S^1$ needs to be orientation-preserving. In this case, we will consider a fixed orientation both on $\tilde U$ and on $S^1$. Every element of $\Gamma$ may preserve both orientations, or reverse both.

The fibers $p^{-1}(x)$ are simple closed curves or intervals. If a fiber projects to a non-singular point of $B$, it is called generic. Otherwise we will call it exceptional.

Let us define the local models for an oriented Seifert fibered orbifold. Locally the fibration is given by the curves  induced on the quotient $(\tilde U \times S^1)/\Gamma$ by   the standard fibration of $\tilde U\times S^1$  given by the curves $\{y\}\times S^1$.

If the fiber is generic, it has a tubular neighborhood with a trivial fibration. 
When $x\in B$ is a cone point labelled by $b$, the local group $\Gamma$ is a cyclic group of order $b$ acting orientation-preservingly on $	\tilde U$ and thus it can act on $S^1$ by rotations. Suppose a generator of $\Gamma$ acts on $\tilde U$ by rotation of an angle ${2\pi}/{b}$ and on $S^1$ by rotation of $-{2\pi a}/{b}$. Then we define the local invariant associated to $x$ to be  $a/b$. This notation seems to suggest that $a/b$ represents a rational number but this not exact: different fractions giving the same rational number may represent different situations.  In fact in the orbifold context $a$ and $b$ are not necessarily coprime. The fiber $p^{-1}(x)$ may be singular (in the sense of orbifold singularities) and the  index of singularity is $\gcd(a,b)$. If $\gcd(a,b)=1$ the fiber is not singular.  A fibered neighborhood of the fiber $p^{-1}(x)$ is a fibered solid torus  (see Figure~\ref{cone-point}).   Forgetting the singularity of the fiber (if any), the local model  coincides with the  local model of a Seifert fibration for manifold (with invariant $(a/\gcd(a,b))/(b/\gcd(a,b))$).  

Note that, if $a\equiv a'(\mod b)$, the invariants $a/b$ and $a'/b$  describe the same situation.
In this section  the local invariants $a/b$ will be  normalized so that $0\leq a<b$. In the formulae we compute in Sections~\ref{abelian} and \ref{remaining} we give the local invariants in a non-normalized form. 

We remark that in the literature different sign conventions are used, we use the same as in \cite{BS} while in 
\cite{Dun1} the invariant is defined to be $-a/b.$
\begin{figure}[htb]
\begin{center}
\includegraphics[height=4cm]{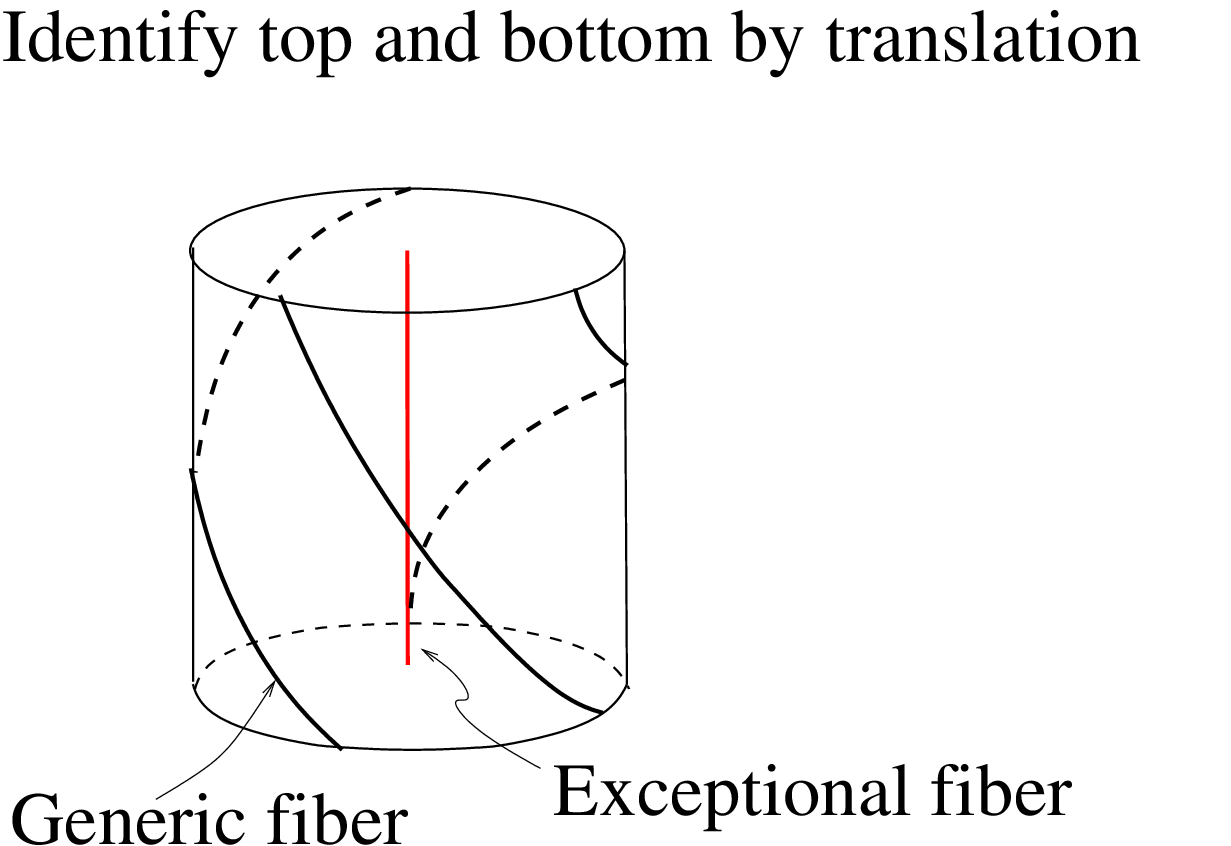}
\caption{A fibered neighborhood of an exceptional fiber of invariant 2/3 corresponding to a cone point}\label{cone-point}
\end{center}
\end{figure}

If $x$ is a corner reflector, namely $\Gamma$ is a dihedral group, then the non-central involutions in $\Gamma$ need to act on $\tilde U$ and on $S^1$ by simultaneous reflections. Here the local model is the so-called solid pillow, which is a topological 3-ball with some singular set inside.  $\Gamma$ has an index-two cyclic subgroup, acting as we previously described. The local invariant associated to $x$ is defined as the local invariant ${a}/{b}$ of the cyclic index-two subgroup. Again, the fiber $p^{-1}(x)$ has singularity index $\gcd(a,b)$. In Figure~\ref{corner-reflector}  the fiber $p^{-1}(x)$ is represented by the  vertical segment. The fibers of $\tilde U\times S^1$ intersecting the axes of reflections of $\Gamma$ in $\tilde U$ project to segments that are exceptional fibers of the 3-orbifold; the other fibers of $\tilde U\times S^1$  project to simple closed curves.  In Figure~\ref{corner-reflector} the  horizontal  segments are not fibers but consist of the endpoints of the fibers that are segments; they are singular (in the sense of orbifold singularities) of index two.

\begin{figure}[htb]
\begin{center}
\includegraphics[height=5cm]{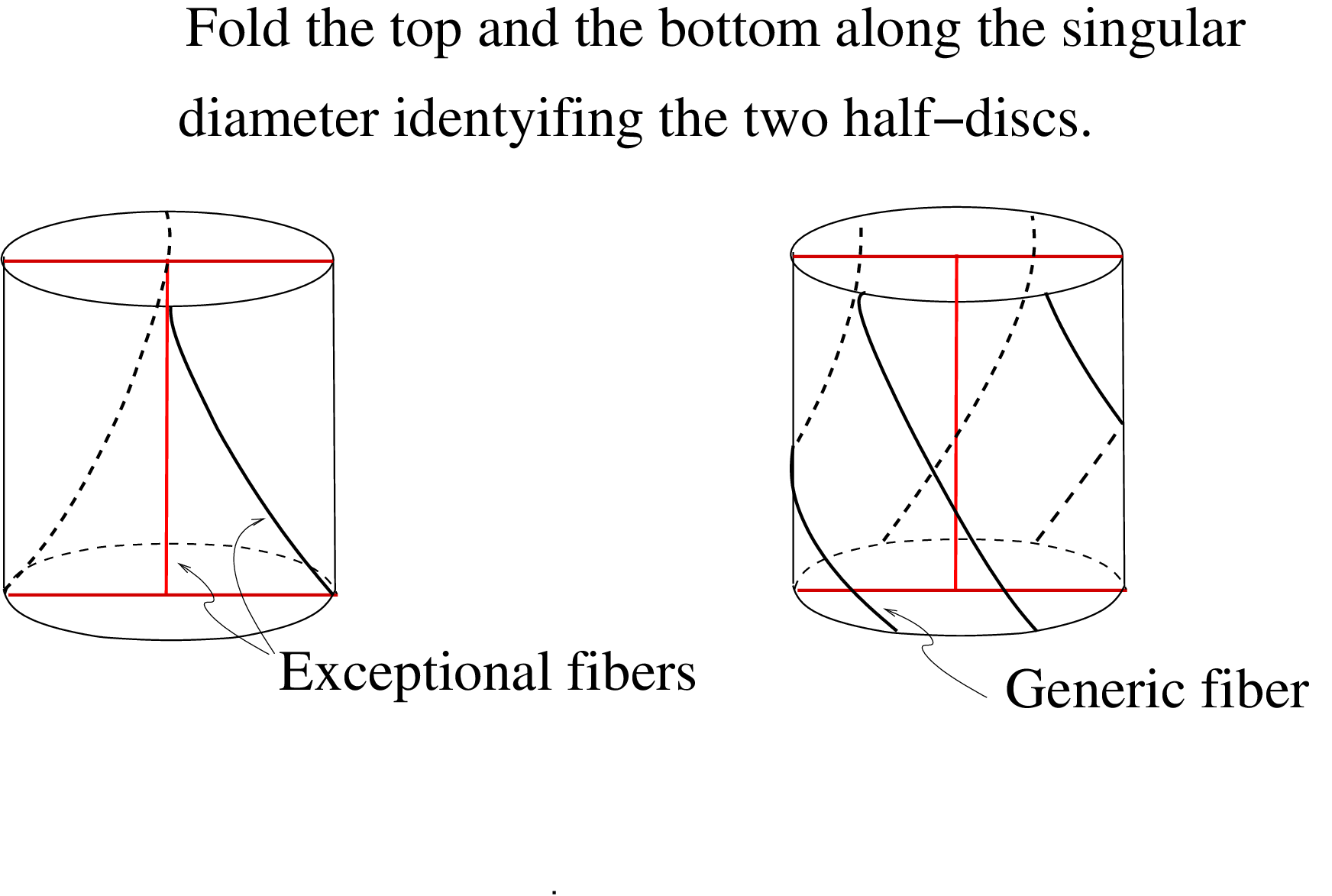}
\caption{Two copies of a fibered neighborhood of an exceptional fiber of invariant 1/2 corresponding to a corner reflector}\label{corner-reflector}
\end{center}
\end{figure}

Finally, over mirror reflectors (local group $\mathbb{Z}_2$), we have a special case of the dihedral case. The local model is topologically a 3-ball with two disjoint singular arcs of index 2. More details can be found in  \cite{BS} or \cite{Dun1}.

We will now state the classification theorem. An oriented  Seifert fibered orbifold will be determined, up to  diffeomorphisms which preserve the orientation and the fibration, by the following data: the base orbifold, the local invariants associated to cone points and corner reflectors, an additional invariant $\xi\in\mathbb{Z}_2$ associated to each boundary component of the base orbifold, and the Euler number. If we change the orientation of the orbifold, then the signs of local invariants and of the Euler number are also changed. The normalized local invariants pass in this case from $a/b$ to $(b-a)/b$. For the formal definitions of Euler number and of invariants associated to boundary components, as well as the proofs of the stated results, we refer again to \cite{BS} or \cite{Dun1}.

\begin{Theorem}\label{one}
Let $O$ and $O'$ be Seifert fibered orbifolds, where $O\rightarrow B$ and $O'\rightarrow B'$ are the fibration projections. If there is a diffeomorphism $\phi:B\rightarrow B'$, the Euler numbers $e(O)$ and $e(O')$ are equal and the local invariants associated to cone points, corner reflectors and boundary components of $B$ coincide with the local invariants of their images in $B'$ through $\phi$, then $O$ and $O'$ are  diffeomorphic.
\end{Theorem}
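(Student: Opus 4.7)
The plan is to build, piece by piece, a fiber-preserving diffeomorphism $F:O\to O'$ covering the given $\phi:B\to B'$, using the local invariants to match fibered neighborhoods of singular points and the Euler number to absorb the global twisting obstruction. This mirrors the classical strategy for Seifert-fibered 3-manifolds, adapted to the orbifold local models described just above (fibered solid torus, solid pillow, mirror-boundary neighborhood).

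First, I would decompose the base $B$ into a regular part $B_0$ together with small disk (or half-disk) neighborhoods of each cone point, corner reflector, and boundary component, and likewise decompose $B'$ via $\phi$. Over each such special neighborhood, the preimage in $O$ is exactly one of the local models recalled earlier, and the model is determined up to fiber-preserving, orientation-preserving diffeomorphism by the local invariant $p/q \in \Q/\Z$ (together with the $\xi\in\Z_2$ invariant in the boundary case). Since by hypothesis the invariants of $O$ and $O'$ agree under $\phi$, on each special neighborhood I can fix such a fiber-preserving diffeomorphism; call the union of these diffeomorphisms $F_{\mathrm{loc}}$.

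Second, over $B_0$ the fibration restricts to an honest $S^1$-bundle over a (possibly nonorientable) surface with nonempty boundary, which is necessarily trivial. So both $p^{-1}(B_0)$ and $p'^{-1}(\phi(B_0))$ can be written as $B_0\times S^1$, and any bundle isomorphism lifting $\phi|_{B_0}$ exists. The question is whether the chosen trivialization can be made to agree with $F_{\mathrm{loc}}$ on the boundary tori (or Klein bottles, in the reflector case). A self-diffeomorphism of a fibered torus that preserves the fibration and is isotopic to the identity on the base is determined up to isotopy by an integer vertical shift, so the discrepancy between $F_{\mathrm{loc}}$ and the trivialization amounts to one integer per boundary circle of $B_0$. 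Shifts over boundary circles adjacent to cone points or corner reflectors can be reabsorbed by replacing the chosen local trivialization by another one in the same class modulo $\Z$, without changing the local invariant in $\Q/\Z$. After these adjustments, the only remaining quantity is the sum of the leftover shifts, which by the definition of the Euler number is exactly $e(O)-e(O')$. By hypothesis this is zero, so the trivializations on $B_0\times S^1$ can be chosen to match $F_{\mathrm{loc}}$ along every boundary component, and the two pieces glue to the desired global diffeomorphism $F$.

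The main obstacle I expect is the careful bookkeeping of conventions, particularly the sign and normalization of $p/q$ (the paper flags that \cite{BS} and \cite{Dun1} disagree on signs), the meaning of vertical shifts at corner reflectors and mirror reflectors where the local group is dihedral rather than cyclic, and the $\xi\in\Z_2$ invariant attached to each boundary component of $B$. One must verify that the dihedral local model still has a one-parameter family of vertical shifts available to absorb discrepancies, and that the Euler number, suitably defined in the orbifold setting, really equals the total leftover shift once all local choices are fixed. Once these conventions are locked down and the dihedral/mirror cases handled in parallel to the cyclic one, the remainder of the argument is a routine translation of the manifold Seifert classification proof, for which the paper refers to \cite{BS} and \cite{Dun1}.
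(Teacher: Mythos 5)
This theorem is not proved in the paper: it is quoted from the literature, with the authors explicitly deferring both the precise definitions (Euler number, boundary invariants $\xi$) and the proofs to \cite{BS} and \cite{Dun1}. Your outline --- decompose $B$ into a regular part and special neighborhoods, match the local models over cone points, corner reflectors and mirror boundaries using the local invariants, and absorb the remaining integer discrepancy in vertical shifts into the Euler number --- is exactly the strategy of the cited proofs, so it is consistent with what the paper relies on rather than an alternative route. One small correction: over the regular part $B_0$ the circle bundle is trivial only when $B_0$ is orientable; in general it is the unique bundle whose $w_1$ is forced by the orientability of $O$, which still suffices for your gluing argument (and in this paper all base orbifolds are quotients of $S^2$, so the orientable case is the only one that actually occurs).
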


Since we will only need to consider base orbifolds arising as quotients of $S^2$, and these have at most one boundary component, the formula in Proposition 2 implies that the local invariant of that component is forced by the other data. We will make frequent use of the following corollary of Theorem \ref{one}.

\begin{Corollary}
Let $O$ and $O'$ be Seifert fibered orbifolds, where $O\rightarrow B$ and $O'\rightarrow B'$ are the fibration projections. Suppose $B$ and $B'$ have at most one boundary component. If there is a  diffeomorphism $\phi:B\rightarrow B'$, the Euler numbers $e(O)$ and $e(O')$ are equal and the local invariants associated to cone points and corner reflectors of $B$  coincide with the local invariants of their images in $B'$ through $\phi$, then $O$ and $O'$ are  diffeomorphic.
\end{Corollary}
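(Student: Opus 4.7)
The plan is to deduce the corollary from the Theorem by showing that, under the hypothesis of at most one boundary component, the data already specified in the corollary determine the remaining invariant appearing in the Theorem, namely the $\mathbb{Z}_2$-invariant associated to a boundary component of the base. Once this is established, the Theorem applies directly and yields the desired diffeomorphism.

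First I would split into two cases according to whether $B$ (equivalently $B'$, via $\phi$) has zero or one boundary component. If $B$ has no boundary, then there is no boundary invariant to compare, and the hypotheses of the corollary coincide verbatim with those of the Theorem, so the conclusion is immediate. If $B$ has exactly one boundary component, let $\xi\in\mathbb{Z}_2$ and $\xi'\in\mathbb{Z}_2$ denote the invariants associated to the boundary components of $B$ and $B'$ respectively. The goal is to prove $\xi=\xi'$.

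The key input is Proposition~\ref{somma eulero invarianti}, which expresses the Euler number $e(O)$ as a sum of contributions coming from the cone points, the corner reflectors, and the boundary components of the base orbifold (and analogously for $e(O')$). Rearranging this formula, one isolates the boundary contribution as
\[
\text{(boundary term)} \;=\; e(O) \;-\; \text{(sum of contributions from cone points and corner reflectors)}.
\]
Since by hypothesis $e(O)=e(O')$ and the local invariants at cone points and corner reflectors of $B$ agree with those of their images under $\phi$ in $B'$, the right-hand sides of the two resulting equations (for $O$ and for $O'$) are equal. Hence the boundary contributions agree, which forces $\xi=\xi'$ because with a single boundary component the $\mathbb{Z}_2$-invariant is the only ambiguity in the boundary term.

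With $\xi=\xi'$ in hand, all the hypotheses of the Theorem are satisfied: the base orbifolds are diffeomorphic via $\phi$, the local invariants at cone points and corner reflectors match, the boundary invariants match, and the Euler numbers coincide. Applying the Theorem concludes that $O$ and $O'$ are diffeomorphic. The only nontrivial step is the inversion of the Euler number formula to recover the boundary invariant, and this is straightforward precisely because there is at most one boundary component; with two or more boundary components the same argument would only determine the sum of the $\xi$'s, not the individual values, which is why the hypothesis is essential.
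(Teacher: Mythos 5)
Your proposal is correct and follows exactly the route the paper intends: the text preceding the Corollary states that Proposition~\ref{somma eulero invarianti} is what enables the proof, i.e.\ the Euler number together with the cone point and corner reflector invariants determines the single boundary invariant $\xi\in\mathbb{Z}_2$, so the Theorem applies. The only nitpick is that the formula in Proposition~\ref{somma eulero invarianti} is a congruence mod $1$ rather than an equality, but since $\tfrac{1}{2}\xi$ takes only the values $0$ or $\tfrac{1}{2}$, the congruence still pins down $\xi$ uniquely, so your argument goes through.
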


The following statements will be useful.

\begin{Theorem}\label{eulero}
Let $\pi:O\rightarrow O'$ be a finite orbifold covering, where $O\rightarrow B$ and $O'\rightarrow B'$ are the fibration projections. Suppose $\pi$ preserves the fibrations and thus induces an orbifold covering $\bar \pi:B\rightarrow B'$ of degree $l$. Moreover, suppose $m$ is the degree with which a generic fiber of $O$ covers its image in $O'$ (note that $lm$ is the degree of the covering $\pi$). Then the Euler numbers of $O$ and $O'$ are in the following relation: $$e(O)=\frac{m}{l}e(O')\,.$$
\end{Theorem}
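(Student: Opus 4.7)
The plan is to factor the fibered covering $p$ as the composition of two simpler coverings and verify the formula in each case. I would build an intermediate Seifert fibered orbifold $O''$ as the pullback of the fibration $O' \to B'$ along the base covering $\bar{p} \colon B \to B'$; this gives a factorization $p = p_2 \circ p_1$, where $p_1 \colon O \to O''$ is fiber-preserving and induces the identity on the base (a pure fiber cover of fiber degree $m$ and base degree $1$), while $p_2 \colon O'' \to O'$ is fiber-preserving with fiber degree $1$ and base degree $l$. Once I show $e(O) = m \cdot e(O'')$ and $e(O'') = (1/l) e(O')$, composing yields the claimed identity $e(O) = (m/l) e(O')$.

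For the pure fiber cover $p_1$, the computation is local at each generic or exceptional fiber. Passing to an $m$-fold fiber-wise cover multiplies the ``vertical'' datum of each Seifert invariant by $m$, and likewise the integer obstruction class; using the standard expression $e = -b - \sum \beta_i/\alpha_i$ (or its analogue for orbifold fibrations, which also includes contributions from corner reflectors and boundary components of $B$), every term scales uniformly by $m$, giving $e(O) = m \cdot e(O'')$. A useful sanity check here is the local model at a cone point of invariant $p/q$ in Figure~\ref{cone-point}: taking an $m$-fold fiber cover of the solid fibered torus replaces $p/q$ by $mp/q$ before renormalization.

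For the pure base cover $p_2$, I would analyze how Seifert invariants lift under the orbifold pullback $\bar p$. At a point $x' \in B'$ with local invariant $\beta'/\alpha'$, each preimage $x \in \bar{p}^{-1}(x')$ with ramification index $e_x$ inherits a computable local invariant, and summing these contributions over all preimages, using $\sum_{x \in \bar{p}^{-1}(x')} e_x = l$, gives $e(O'') = (1/l) e(O')$. The main obstacle is precisely this local step: when $\bar p$ is ramified exactly over an exceptional fiber of $O'$, one must carefully track the interaction between the ramification index and the order of the isotropy in $B'$, and verify that the resulting local invariants in $O''$ combine with the integer obstruction to scale the global Euler number by $1/l$. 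Once this bookkeeping is done for cone points (and analogously for corner reflectors and mirror-boundary components), composing with the fiber-cover step yields the theorem.
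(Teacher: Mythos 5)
The paper does not actually prove Theorem~\ref{eulero}: the proofs of the statements in Section~\ref{Seifert orbifold} are explicitly delegated to \cite{BS} and \cite{Dun1}, so there is no argument of the authors to compare yours with. Your strategy --- factor $p$ through the pullback $O''=B\times_{B'}O'$ into a pure fiber cover of degree $m$ followed by a pure base cover of degree $l$ --- is the standard route to this naturality statement. The problem is that both of your intermediate scaling laws point the wrong way, and the two mistakes cancel.

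For the fiber cover $p_1:O\to O''$, take $O=S^3$ with the Hopf fibration, $e(O)=-1$, and $O''=S^3/\mathbb{Z}_p=L(p,1)$ with $\mathbb{Z}_p$ acting by left multiplication by $e^{2\pi i/p}$, i.e. $(z_1,z_2)\mapsto(e^{2\pi i/p}z_1,e^{2\pi i/p}z_2)$. This action is free and trivial on the base, so $l=1$, $m=p$, and $O''\to S^2$ is an honest circle bundle; its Euler number is the integer $-p$, not $-1/p$. Hence $e(O)=\frac{1}{m}e(O'')$, not $m\,e(O'')$ as you claim; your local sanity check is reversed in the same way, since unwrapping the fibers $m$ times divides the slope of the generic fiber, hence the local contribution to $e$, by $m$. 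Symmetrically, for the base cover take $O'=S^3/\mathbb{Z}_h$ as in Case 1 of Section~\ref{abelian}: its pullback along $S^2\to S^2(h,h)$ is $S^3$ itself, with $l=h$ and fiber degree $1$, and the paper computes (in agreement with Proposition~\ref{somma eulero invarianti}) $e(O')=-1/h$; thus $e(O'')=-1=l\,e(O')$, so the Euler number is multiplied by $l$, not divided. The correct composite is therefore $e(O)=\frac{l}{m}e(O')$. This is the reciprocal of the displayed formula, but it is what the paper actually uses: in Claim~\ref{base-euler} the base degree is $nr/2$, the fiber degree is $m$, and the conclusion is $e(S^3/G)=-2m/(nr)=\frac{m}{nr/2}\,e(S^3)$, and the entries of Table~\ref{topolino} follow the same rule. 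In other words the printed statement has $l$ and $m$ interchanged, and your two reversed steps conspire to reproduce the misprint rather than the true formula. Finally, even with the scaling directions corrected, the substantive content --- the local bookkeeping at ramified cone points, corner reflectors and boundary components, which you yourself flag as ``the main obstacle'' --- is only announced and never carried out, so the proposal remains a plan rather than a proof.
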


\begin{Proposition} \label{somma eulero invarianti}
Let $O$ be a Seifert fibered orbifold with Euler number $e(O)$ and local invariants associated to cone points, corner reflectors and boundary components of the base orbifold respectively ${a_t}/{b_t}$, ${a_{sk}}/{b_{sk}}$ and $\xi_k$. Then $$e(O)+\sum_t\frac{a_t}{b_t}+\frac{1}{2}\sum_k\left(\sum_s\frac{a_{sk}}{b_{sk}}+\xi_k\right)\equiv 0\;\textnormal{mod}1\,.$$
\end{Proposition}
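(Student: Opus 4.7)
The plan is to derive the identity from the obstruction-theoretic characterization of the Euler number of an oriented Seifert fibered orbifold. Following~\cite{BS, Dun1}, $e(O)$ is defined, up to sign, as the total obstruction in $\mathbb{Q}/\mathbb{Z}$ to extending a trivialization of the fibration from the complement of the singular fibers to the entire orbifold. This obstruction decomposes as a sum of local contributions, one for each exceptional fiber of $O$ together with one for each mirror boundary component of the base; the proposition amounts to computing these local contributions and checking that they assemble into the stated formula.

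At an exceptional fiber over a cone point of $B$ with Seifert invariant $p_i/q_i$, the local model is the fibered solid torus of Figure~\ref{cone-point}, in which the generator of the cyclic local group rotates the fiber by $-2\pi p_i/q_i$; the corresponding local contribution is therefore exactly $p_i/q_i$. At a dihedral-type exceptional fiber---namely, over a corner reflector with local invariant $p_{jk}/q_{jk}$---the local model (see Figure~\ref{corner-reflector}) is a $\mathbb{Z}_2$-quotient of the cyclic local model above. The $2$-fold cover unfolding the reflection recovers the cyclic case, and by Theorem~\ref{eulero} applied with base degree $2$ and fiber degree $1$, the obstruction halves under this $\mathbb{Z}_2$-quotient, producing a local contribution of $\tfrac{1}{2}p_{jk}/q_{jk}$. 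The same $\mathbb{Z}_2$-halving, applied to the local model of the mirror boundary structure described in~\cite{BS}, shows that the $j$-th mirror boundary contributes $\tfrac{1}{2}\xi_j$ to the total obstruction.

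Summing all local contributions yields
\[
\sum_i \frac{p_i}{q_i}+\frac{1}{2}\sum_j\Bigl(\sum_k\frac{p_{jk}}{q_{jk}}+\xi_j\Bigr)\equiv -e(O)\pmod 1,
\]
which rearranges to the claimed identity. The principal technical point is the rigorous justification of the factor $1/2$ for the dihedral and mirror-boundary contributions; this rests on the $\mathbb{Z}_2$-quotient description of the corresponding local models combined with the multiplicativity of the Euler number in Theorem~\ref{eulero}. Everything else is a bookkeeping summation of local obstructions, already known from the classical manifold case (where only the cone-point contributions appear).
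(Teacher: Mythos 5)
The paper does not actually prove this proposition: it is quoted from the literature, with the definitions of the Euler number and of the boundary invariants $\xi_k$, together with the proofs, deferred to \cite{BS} and \cite{Dun1}. So there is no in-paper argument to compare against, and your proposal has to be judged on its own. The cone-point part of your sketch is the standard obstruction-theoretic argument (choose a section of the fibration away from the exceptional fibers; the integer ambiguity in the choice is exactly what makes the statement a congruence mod $1$ rather than an equality), and that part is fine, modulo the slip of calling $e(O)$ itself an element of $\mathbb{Q}/\mathbb{Z}$ --- it is a genuine rational number, e.g.\ $-1$ for the Hopf fibration.

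The gap is in the dihedral and mirror-boundary terms. Your justification of the factor $\tfrac12$ is to pass to the double cover $\tilde O\to O$ unfolding the reflections and apply Theorem~\ref{eulero} with $l=2$, $m=1$. But that argument only transports a congruence: from $e(\tilde O)+\sum(\text{lifted local invariants})\equiv 0 \pmod 1$ and $e(\tilde O)=2e(O)$ you can only conclude that $e(O)+\sum_i p_i/q_i+\tfrac12\sum_{j,k}p_{jk}/q_{jk}\equiv 0 \pmod{\tfrac12}$, i.e.\ the formula modulo $\tfrac12\mathbb{Z}$. The boundary invariant $\xi_k\in\mathbb{Z}_2$ contributes $0$ or $\tfrac12$, which lives exactly in that residual ambiguity, so a halving argument is structurally incapable of seeing it; asserting that ``the same $\mathbb{Z}_2$-halving shows the boundary contributes $\tfrac12\xi_j$'' is not a derivation but a restatement of what must be proved. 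One can check on the paper's own tables (e.g.\ Family 10 with $n$ even: $e=-m/2n$, corner invariants $m/n$, $m/2$, $m/2$) that the mod-$1$ statement forces $\xi\equiv m\pmod 2$, so the value of $\xi_k$ carries real information that your argument never computes. To close the gap you would have to work directly with the definition of $\xi_k$ in \cite{BS} --- roughly, how the chosen section behaves over the mirror boundary circle --- and show that the half-integer discrepancy in the obstruction count over a boundary component is exactly $\tfrac12(\sum_k p_{jk}/q_{jk}+\xi_k)$ modulo $1$, not merely modulo $\tfrac12$.
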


\bigskip

Seifert fibrations of $S^3$ are well known: it is proved in \cite{SeT} that, up to diffeomorphism, they are  given by  the maps of the form 
$\pi:S^3\rightarrow S^2\cong \mathbb{C}\cup\left\{\infty\right\}$ $$\pi(z_1+z_2 j)=\frac{z_1^u}{z_2^v}\qquad\textrm{or}\qquad\pi(z_1+z_2 j)=\frac{\overline{z}_1^u}{z_2^v}$$ for $u$ and $v$ coprimes. The base orbifold is $S^2$ with two possible cone points. When $u=v=1$, $\pi(z_1+z_2 j)={z_1}/{z_2}$ is called the Hopf fibration. In this case the base orbifold is $S^2$ and all the fibers are generic;  if we consider  the  orientation of $S^3$ induced by the standard orientation of $\C \times \C$, the  Euler number of the Hopf fibration is $-1$.

It is known (see Theorem 5.1 in \cite{DM}) that an orientable   Seifert fibered  3-orbifold $S^3/G$ is  isometric to an orbifold $S^3/G'$ where $G'$ is a subgroup of  $\SO4$ respecting the Hopf fibration; the isometry may be orientation-reversing.

It can be easily checked that the isometry corresponding to $(0,w_1+w_2 j)\in S^3\times S^3$ does preserve the Hopf fibration, with induced action on $S^2$ given by $$\lambda\mapsto \frac{\overline w_1 \lambda+\overline w_2}{-w_2\lambda+w_1}\,.$$
Analogously, it can be checked that an isometry given by $(w_1+w_2 j,0)$ preserves the Hopf fibration if $w_1=0$ or $w_2=0$, but not in the general case. 
On the other hand, the general fibration $\pi(z_1+z_2 j)={z_1^u}/{z_2^v}$ is preserved by $(w_1+w_2 j,u_1+u_2 j)$ provided $w_2=u_2=0$ or $w_1=u_1=0$. It is not necessary to repeat the computations for the remaining fibrations; it suffices to note that the orientation-reversing isometry $z=z_1+z_2 j\mapsto z^{-1}=\overline z_1-z_2 j$ maps the fibration $\pi(z_1+z_2 j)={\overline{z}_1^u}/{z_2^v}$ to $\pi(z_1+z_2 j)={z_1^u}/{z_2^v}$.  The isometry $\Phi(w,w')$ preserves a fibration $\pi(z_1+z_2 j)={\overline{z}_1^u}/{z_2^v}$ if and only if $\Phi(w'^{-1},w^{-1})$ preserves $\pi(z_1+z_2 j)={z_1^u}/{z_2^v}$.

If $L$ is  $C_n$ or $D^*_{2n}$ the group $G=(L,L_K,R,R_K,\phi)$ leaves invariant the Hopf fibration. 
We note that if  $G$ is  a group of  Families $1,$ $1',$ $11,$ and $11'$, then $G$  preserves all the fibrations of the sphere and  the quotient orbifold $S^3/G$ admits infinite fibrations.
When $R$ is  $C_n$ or $D^*_{2n}$  the fibration given by $\frac{\overline{z}_1}{z_2}$ is left invariant and the group can be conjugated by the orientation-reversing map $z\rightarrow z^{-1}$ to the group $(R,R_K,L,L_K,\phi^{-1})$ which preserves the Hopf fibration (as one should expect by Theorem 5.1  in \cite{DM}).
If both $L$ and $R$ are isomorphic to $T^*$, $O^*$ or $I^*$ no fibration of $S^3$ is preserved (these are the groups considered in \cite{Dun3}).

\section{The quotient of $S^3$ by an abelian or generalized dihedral group}\label{abelian}

We consider first the quotients of $S^3$ by the groups that are images under $\Phi$ of  groups belonging to Families 1, $1'$, $11$ and $11'$. The groups in this family are abelian or generalized dihedral. We compute the fibration induced on the quotient by the Hopf fibration of $S^3$.

The 3-sphere  $S^3=\{z_1+z_2j\, \,|\, |z_1|^2+|z_2|^2=1\}$ can be decomposed by two solid tori $T_1=\{z_1+z_2j\,|\, |z_1|\leq \sqrt {2}/2\}$ and $T_2=\{z_1+z_2j\,|\, |z_2|\leq \sqrt{2}/2\}$. 
We consider first  the  isometries  of $S^3$ sending $z_1+z_2j$ to $w_1 z_1+w_2 z_2 j$ where $w_1$ and $w_2$ are fixed complex numbers of norm 1. The isometries of this kind leave invariant the two solid tori. We denote by  $\mu$ and $\lambda$ two oriented curves in the common boundary of the two solid tori such that $\mu$ is the meridian of $T_1$ and $\lambda$ is the meridian of $T_2$ (and a longitude of  $T_1$).   

\begin{Lemma}\label{quoziente-toro}
Let $\rho$ be the isometry of $S^3$ sending $z_1+z_2j$ to $e^{2\pi\frac{g}{f}i}z_1+e^{2\pi\frac{d}{f}i}z_2j$ where $d,\,f,\,g$ are integers such that $\gcd(d,\,f,\,g)=1$ and $f\neq 0$. 
The  quotient of $T_1/\langle\rho\rangle $  is again a solid torus. Moreover we can choose a pair of oriented curves $(\mu',\lambda')$    in  the boundary  of $T_1/\langle\rho\rangle $ such that $\mu'$ is a meridian and  $\lambda'$ a longitude  and such that $\pi_1^*$, the map induced by the quotient map $\pi_1: T_1 \rightarrow T_1 /\langle\rho\rangle $ on the first homology group of the boundary, acts in the following way:

$$
\begin{array}{rl}
\pi_1^*([\mu])&=\gcd (d,f)[\mu']\\
\pi_1^*([\lambda])&=-(g\overline{d'})[\mu']+f' [\lambda']
\end{array}
$$

\noindent
 where $d'=d/ \gcd(d,f)$, $f'=f/ \gcd(d,f)$ and $\overline{d'}$ is the positive integer strictly smaller then $f'$ such that $d'\overline{d'}\equiv 1\,\mod f'.$ 
\end{Lemma}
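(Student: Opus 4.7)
The plan is to lift the action of $\rho$ on $\partial T_1$ to the universal cover and identify the resulting deck lattice.

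Parametrize $T_1$ as $D^2\times S^1$ via $(r,\alpha,\beta)\mapsto (re^{i\alpha},\sqrt{1-r^2}\,e^{i\beta})$ with $0\le r\le\sqrt{2}/2$, so that $\rho$ acts by $(r,\alpha,\beta)\mapsto(r,\alpha+2\pi g/e,\beta+2\pi d/e)$. I would first verify that $\rho$ acts freely on $\partial T_1$: a fixed point of $\rho^k$ there forces $kg,kd\in e\mathbb{Z}$, hence $k\in e\mathbb{Z}$ because $\gcd(g,d,e)=1$. Inside $T_1$ only the core $\{r=0\}$ has nontrivial stabilizer, equal to $\langle\rho^{e/\delta}\rangle$ of order $\delta:=\gcd(d,e)$, acting on the normal disk by rotation of angle $2\pi g/\delta$; this has exact order $\delta$ since $\gcd(g,d,e)=1$ forces $\gcd(g,\delta)=1$. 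Viewing $T_1$ as a disk bundle over the $\beta$-circle, the quotient is a disk bundle over a circle and hence $T_1/\langle\rho\rangle$ is, topologically, again a solid torus (with singular core of index $\delta$).

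Next I pass to the universal cover $\mathbb{R}^2$ of $\partial T_1$, whose deck lattice is $\Lambda_1=(2\pi\mathbb{Z})^2$ with the generators $[\mu]=(2\pi,0)$ and $[\lambda]=(0,2\pi)$. The lift of $\rho$ is translation by $v:=(2\pi g/e,\,2\pi d/e)$, so $\partial(T_1/\langle\rho\rangle)=\mathbb{R}^2/\Lambda_2$ with $\Lambda_2:=\Lambda_1+\mathbb{Z}v$, and $\pi_1^*$ is just the inclusion $\Lambda_1\hookrightarrow\Lambda_2$. Writing $d=\delta d'$, $e=\delta e'$ with $\gcd(d',e')=1$, I propose the candidate basis
\[
\mu' := \left(\tfrac{2\pi}{\delta},\,0\right),\qquad \lambda' := \left(\tfrac{2\pi g\overline{d'}}{e},\,\tfrac{2\pi}{e'}\right).
\]
The Bézout identities $a\delta+tg=1$ (available since $\gcd(g,\delta)=1$) and $be'+\overline{d'}d'=1$ (by definition of $\overline{d'}$) give explicit integer combinations of the generators of $\Lambda_2$ realising $\mu'$ and $\lambda'$. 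The determinant of the matrix with columns $\mu',\lambda'$ is $(2\pi)^2/(\delta e')=(2\pi)^2/e$, which matches the covolume of $\Lambda_2$ (of index $e$ in $\Lambda_1$), so $(\mu',\lambda')$ is indeed a $\mathbb{Z}$-basis of $\Lambda_2$.

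Finally I identify $\mu'$ as a meridian of the quotient solid torus: each meridian disk $\{\beta=\beta_0\}$ of $T_1$ has stabilizer $\langle\rho^{e/\delta}\rangle$ of order $\delta$ and projects to a topological disk whose boundary traces $\mu'$ once, while the original boundary $\mu$ covers it $\delta$ times. The formulae then drop out by expressing $[\mu]$ and $[\lambda]$ in the basis $(\mu',\lambda')$: $[\mu]=(2\pi,0)=\delta\mu'$ directly, and writing $[\lambda]=(0,2\pi)=a\mu'+b\lambda'$, the second coordinate forces $b=e'$, after which the first coordinate forces $a=-g\overline{d'}$. The orientation of $\lambda'$ chosen above is consistent with the orientation conventions on $\mu,\lambda$. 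The main obstacle is the topological identification of the quotient as a solid torus and of $\mu'$ as a meridian; once this is secured, the lemma reduces to the routine lattice computation above.
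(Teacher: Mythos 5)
Your proof is correct, and all the arithmetic checks out: $\mu'=(2\pi/\delta,0)$ and $\lambda'=(2\pi g\overline{d'}/e,\,2\pi/e')$ do lie in $\Lambda_2=\Lambda_1+\mathbb{Z}v$ (via the two B\'ezout identities you cite, using $\gcd(g,\delta)=1$ and $\gcd(d',e')=1$), the covolume count $[\Lambda_2:\Lambda_1]=e$ certifies they form a basis, and expressing $[\mu]$ and $[\lambda]$ in that basis yields exactly the coefficients $\gcd(d,e)$ and $-g\overline{d'},\,e'$ of the statement. The geometric content is the same as the paper's, but the presentation differs in a useful way. The paper realizes $T_1/\langle\rho\rangle$ by a cut-and-paste fundamental domain (a $2\pi/e'$-tall slice of angle $2\pi/\delta$ of the cylinder, with a final twist of $-2\pi g\overline{d'}/e'$ computed from $\rho^{\overline{d'}}$), reads off $\pi_1^*([\mu])=\delta[\mu']$ from the picture, and defers the choice of the longitude and the resulting formula for $\pi_1^*([\lambda])$ to Seifert--Threlfall. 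You instead work in the universal cover of the boundary torus and identify $\pi_1^*$ with the lattice inclusion $\Lambda_1\hookrightarrow\Lambda_2$; your vector $\lambda'=b[\lambda]+\overline{d'}v$ is precisely the class of the paper's twisting element $\rho^{\overline{d'}}$, so the two computations coincide, but your covolume argument replaces the external reference and makes the verification that $(\mu',\lambda')$ is a basis fully self-contained. You also spell out the two points the paper treats lightly --- that the quotient is a solid torus (via the disk-bundle structure over the $\beta$-circle) and that $\mu'$ is genuinely a meridian (via the image of a meridian disk, whose stabilizer $\langle\rho^{e'}\rangle$ has order $\delta$) --- which is exactly where the real topological content sits. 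No gaps.
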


\begin{proof}

We can think at $T_1$ as a cylinder of height $2\pi$ with the top and the bottom identified by a translation. To obtain  a  fundamental domain, we can first cut  the cylinder along a plane parallel to the bases obtaining  a smaller cylinder of height $2\pi/f'$, then we take a wedge of the smaller cylinder  with  angle equal to  $2\pi/\gcd(d,f)$. The quotient $T_1/\langle\rho\rangle $ can be visualized by  identifying the lateral sides of this fundamental domain with a rotation (which gives again a cylinder), and then  identifying the bottom disk to the top disk of this new cylinder with a twist of $-(g\overline{d'}2\pi)/f'$ radians. The  angle of the twist is  computed using $\rho^{\overline{d'}}$, a power of $\rho$ acting as a $2\pi/f'$ translation along the height of the starting cylinder. 

This representation of the quotient ensures that $T_1/\langle\rho\rangle $ is a torus and makes evident that a meridian of $T_1$ is sent by $\pi_1$ to $\gcd(d,f)$ times a meridian of $T_1/\langle\rho\rangle $.  With the choice of the appropriate longitude $\lambda'$ in the quotient (see for example \cite[p.362-363]{SeT}) we obtain also   $\pi_1^*([\lambda])=-(g\overline{d'})[\mu']+f' [\lambda'].$

\end{proof}

We will use this lemma to compute the invariant of the fibration induced on the quotient torus by the fibration of $T_1$.  We remark that the core of $T_1$ is fixed pointwise by a subgroup of $\langle\rho\rangle $ of order $\gcd(d,f)$.  If the torus $T_1$ is fibered by $p\mu+q\lambda$ curves, the quotient torus is fibered by $(\gcd(d,f)p-g\overline{d'}q)\mu'+f'q\lambda'$ curves, the slope of the fiber is $(\gcd(d,f)p-g\overline{d'}q)/qf'$ (in this case the fraction might be reducible). We consider the corresponding reduced fraction $a/b$ and  an integer $\overline{a}$ such that $a\overline{a}\equiv 1\, \mod b$, and  find  that the local invariant equals $(\overline{a}\gcd(d,f))/(b\gcd(d,f))$(see~\cite[p.364]{SeT} or~\cite[p.37]{BS}).

We note that, on the one hand,  the choice of $\lambda'$ and the choice of integer $\overline{d'}$ do not affect the value of the invariant, while on the other hand,  the invariant  depends on the homology class of the fiber in $T_1$ and not only on the invariant of the fibration of $T_1$ (that can be normalized mod 1). 

If we consider the quotient of  $T_2$ by  $\langle\rho\rangle $, we can obtain an analogous lemma where the roles of $d$ and $g$ are exchanged.

\begin{Lemma}\label{rel-orbifold-base}
Let $\OO$ be a fibered orbifold with base orbifold $B$ and let $G$  be a finite group of orientation-preserving diffeomorphisms of $\OO$ preserving the fibration. The group $G$ acts  on $B$ (but not necessarily effectively).   The quotient orbifold  $\OO/G$ is fibered by the images of fibers of $\OO$ and the base orbifold of  $\OO/G$ is $B/G$.
\end{Lemma}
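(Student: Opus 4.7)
The plan is to extract the induced $G$-action on $B$ directly from the fact that $G$ permutes the fibers of $p: \OO \to B$. Every $g \in G$ sends fibers to fibers, so there is a unique map $\bar g: B \to B$ with $\bar g \circ p = p \circ g$, and $g \mapsto \bar g$ is a homomorphism. Its kernel consists of elements preserving every fiber setwise; over a generic fiber such an element restricts to a (possibly non-trivial) circle diffeomorphism, so the action on $B$ may indeed fail to be effective. This action is by orbifold diffeomorphisms: pulling back an orbifold chart $U \cong \tilde U/\Gamma$ through $p$ gives the Seifert model $(\tilde U \times S^1)/\Gamma$, and any $g \in G$ lifts, up to the covering action of $\Gamma$, to a diffeomorphism of $\tilde U \times S^1$ respecting the projection to the first factor. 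This induces the required orbifold diffeomorphism between local charts.

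Next I would construct the projection for the quotient. Writing $q: \OO \to \OO/G$ and $\bar q: B \to B/G$ for the quotient maps, the identity $\bar g \circ p = p \circ g$ shows that $\bar q \circ p$ is $G$-invariant, so it factors through $q$ to yield a continuous $\tilde p: \OO/G \to B/G$ with $\tilde p \circ q = \bar q \circ p$. By construction the point-set fibers of $\tilde p$ are the $G$-orbits of fibers of $p$, i.e. the images under $q$ of fibers of $p$, as claimed.

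Finally, the local Seifert model at each point of $B/G$ must be verified. Pick $\bar x \in B/G$, choose a lift $x \in B$, and let $G_x \leq G$ denote the stabilizer of $x$ in the $B$-action. Shrink an orbifold chart $U \cong \tilde U / \Gamma$ around $x$ so that $U$ is $G_x$-invariant and $gU \cap U = \emptyset$ for $g \in G \setminus G_x$; then $q$ identifies $p^{-1}(U)/G_x$ with a neighbourhood of $\tilde p^{-1}(\bar x)$ in $\OO/G$, and $U/G_x$ with a neighbourhood of $\bar x$ in $B/G$. The fibration provides $p^{-1}(U) \cong (\tilde U \times S^1)/\Gamma$, and lifting the $G_x$-action to $\tilde U \times S^1$ produces a finite group $\tilde\Gamma$ fitting in an extension $1 \to \Gamma \to \tilde\Gamma \to G_x/K_x \to 1$, where $K_x \leq G_x$ is the kernel of the action on $U$. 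Since $G_x$ preserves the fibration and the global orientation, every element of $\tilde\Gamma$ preserves the projection to $\tilde U$ and acts diagonally on $\tilde U \times S^1$, so $p^{-1}(U)/G_x \cong (\tilde U \times S^1)/\tilde\Gamma$ is exactly a Seifert local model sitting over $\tilde U/\tilde\Gamma = U/G_x$.

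The main obstacle I expect is this last step: one must check that $\tilde\Gamma$ really does act diagonally (so that the model is Seifert and not merely a circle bundle) and that the resulting local invariant attached to the image fiber is independent of the lifting choices. Orientation preservation of $G$ on $\OO$ is essential here, since it forces the induced action on each $S^1$-fiber to be by a rotation or an orientation-reversing reflection that is synchronised with a reflection on the $\tilde U$-factor; this rules out configurations producing a local group outside the admissible list (cyclic, dihedral, or $\Z_2$). Once this diagonal structure is in place, the lemma assembles from the local pictures and the identification $\tilde p: \OO/G \to B/G$ is the sought Seifert fibration.
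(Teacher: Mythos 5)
Your overall strategy --- factor $\bar q\circ p$ through the quotient map to get $\tilde p:\OO/G\to B/G$, then verify the Seifert local model at each $\bar x\in B/G$ by lifting the stabilizer $G_x$ to the chart cover $\tilde U\times S^1$ and forming an extension $\tilde\Gamma$ of the stabilizer by $\Gamma$ --- is more abstract than the paper's argument, which instead quotients the explicit local models (fibered solid tori and solid pillows) by the stabilizer of a fiber, uses Lemma~\ref{quoziente-toro} to compute the homology class of the image fiber and the rotation induced on the base, and checks case by case that the cone or corner index of the base of the quotient model equals the index of the corresponding point of $B/G$. The problem is that the step you yourself flag as ``the main obstacle'' is exactly where the content of the lemma lives, and your proposed resolution does not close it. A fiber-preserving diffeomorphism of $\tilde U\times S^1$ has the form $(y,t)\mapsto(f(y),h(y,t))$; orientation preservation constrains each fiberwise map $h(y,\cdot)$ (a rotation, or a reflection synchronised with a reflection of $\tilde U$), but it says nothing about $h(y,\cdot)$ being independent of $y$, which is what ``acts diagonally'' means. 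Without that, $(\tilde U\times S^1)/\tilde\Gamma$ is not literally one of the admissible local models. You either need a straightening argument (conjugate the finite action to a diagonal one, e.g.\ by averaging, as in Bonahon--Siebenmann), or you must do what the paper does: exploit that the stabilizers in question act by explicit diagonal isometries $(z_1,z_2)\mapsto(e^{2\pi i g/e}z_1,e^{2\pi i d/e}z_2)$ and compute the quotient directly. Even granting diagonality, the identification of the base of the quotient model with $U/G_x$ is not purely formal: the paper must verify, for instance, that the rotation induced on the base has angle $2\pi(qg-pd)/e$ rather than $2\pi g/e$ (the fibers wind), so that the cone index read off from the quotient solid torus matches the index of the image point in $B/G$. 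Your writeup never confronts this matching, which is the actual assertion ``the base orbifold of $\OO/G$ is $B/G$''.

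Two further points. First, your extension $1\to\Gamma\to\tilde\Gamma\to G_x/K_x\to 1$ should have quotient $G_x$, not $G_x/K_x$: the elements of $K_x$ act trivially on $U$ but in general nontrivially on the fibers above $U$ (they are responsible for a generic fiber of $\OO$ covering its image in $\OO/G$ with degree greater than one, cf.\ Theorem~\ref{eulero}), and discarding them changes $(\tilde U\times S^1)/\tilde\Gamma$. Second, the fibers lying over mirror and corner reflectors of $B$ (solid-pillow local models) are where the paper needs a separate nine-fold case analysis; your uniform framework would in principle absorb these cases too, which would be a genuine advantage, but only once the diagonality and index-matching issues above are actually settled.
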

\begin{proof}

We recall that if  an element of  $G$ maps a fiber to another fiber, the two fibers have the same local model.

We work locally using fibered neighborhoods of the fibers. 

If a fiber $\alpha$ is not invariant under the action of any non-trivial element of $G$, the group $G$ acts freely on  the $|G|$ fibers in the orbit of $\alpha$, and the quotient map $\pi:\OO\rightarrow\OO/G$ can be restricted to a fibered neighborhood of $\alpha$ obtaining a diffeomorphism preserving fibers.   The situation of the base orbifold reflects exactly  this behavior.

We now consider the case in which   $\alpha$ has a non-trivial stabilizer in $G.$ 

Suppose first that $B$ has only  cone points,  so that  $\alpha$  is a  simple closed curve with a fibered tubular neighborhood (we will only need to use this case of Lemma \ref{rel-orbifold-base} in what follows). Afterwards, we will explain how to complete the proof in the general case when $B$ has mirror reflectors, and possibly has corner reflectors.

The quotient map can be restricted locally to an orbifold covering map $\pi:D^2\times S^1\rightarrow (D^2\times S^1)/G_0$ where $G_0$ is the stabilizer of $\alpha$.  We consider in the boundary of $D^2\times S^1$ a longitude $\lambda$ and a meridian $\mu$; let   $p$ and $q$ be  the coprime integers such that a generic fiber of $D^2\times S^1$ is homologous to $p\mu+q\lambda$. The invariant of the fiber $\alpha$ is $\overline{p}n/qn$, where $n$ is the index of singularity of $\alpha$ and $\overline{p}p\equiv 1\,\mod q$. 

We suppose first that $G_0$ fixes  $\alpha$ pointwise. An argument similar to that of Lemma~\ref{quoziente-toro} proves that $(D^2\times S^1)/G_0$ is a solid torus fibered by the images of the fibers of $D^2\times S^1$; the image of a generic fiber of $D^2\times S^1$ is homologous to  $ph\mu'+q\lambda'$, where $\mu'$ and $\lambda'$ are meridian and longitude of the quotient torus and $h$ is the order of $G_0$. The slope of the image of a generic fiber is $(ph)/q$ (where $ph$ and $q$ need not be coprime). In particular  the base orbifold of $ (D^2\times S^1)/G_0$ is a disk and the cone point, that is the image of $\alpha$, has singularity index $(nhq)/\gcd(h,q)$.
 To compute the  action of $G_0$ on the base orbifold,  we consider that the general fiber intersects $q$ times a transverse disk bounded by a meridian  and a circular sector of the transverse disk of angle $2\pi/q $ intersects all the fibers. So $G_0$ induces a group of rotations of order $h/\gcd(h,q)$ fixing the point corresponding to $\alpha$. Since the  cone point in $B$ corresponding to $\alpha$  has index $nq$, in the quotient $B/G$ the singular point has index $(nqh)/\gcd(h,q)$, matching the situation of the base orbifold of $ (D^2\times S^1)/G_0$.

We can suppose now that the group $G_0$ acts effectively on the fiber $\alpha$; if not  we can consider the quotient of $G_0$ by the normal cyclic subgroup of elements fixing pointwise $\alpha$. Since $G_0$ acts effectively on a 1-sphere,  it is cyclic or  dihedral. We consider first the normal cyclic subgroup $G_1$ of elements which act  preserving the orientation on $\alpha$. This group is generated by the map $\rho$ sending $z_1+z_2 j$ to $e^{2\pi\frac{g}{f}i}z_1+e^{2\pi\frac{d}{f}i}z_2j$; since the action on $\alpha$ is effective, we have that $\gcd(d,f)=1$.  The proof of Lemma~\ref{quoziente-toro} implies  that $(D^2\times S^1)/G_1$ is a solid torus fibered by the images of the fibers of $D^2\times S^1$; the image of the generic fiber of $D^2\times S^1$ is homologous to  $(p-qg\overline{d})\mu'+q f \lambda'$.  The index of the cone point in the base orbifold of $(D^2\times S^1)/G_1$ is $(nqf)/\gcd(p-qg\overline{d},f)$.

To compute the action of $\rho$ induced on the base orbifold, we consider a meridian disk $D=D^2\times\{*\}$ in $D^2\times S^1$.   The map $\rho$ rotates the $D^2$-coordinate of this disk by $2\pi g/f$ radians; if  the fibers are homologous to the longitude this is exactly the rotation induced on $B$.  In general  we have to consider that the generic fibers connect the points of $D$ and $\rho(D)$ with a rotation of $-(2\pi pd)/(qf)$ (see Figure \ref{proof}). Moreover  a generic fiber intersects  the meridian disk $q$ times, so the angle of the rotation induced on the base orbifold has to be multiplied by $q$. Therefore, $\rho$ induces a rotation on the base orbifold of angle $2\pi(qg -p d)/f$ fixing the cone point corresponding to $\alpha$. Since $f$ and $\overline{d}$ are coprime we can replace the  angle with  $2\pi(qg\overline{d}-p)/f$. Considering that the cone point of $B$ has index $nq$, the cone point of $B/G$ has the same singularity index  as the cone point of the base orbifold of $(D^2\times S^1)/G_1$.

\begin{figure}[htb]
\begin{center}
\includegraphics[height=4cm]{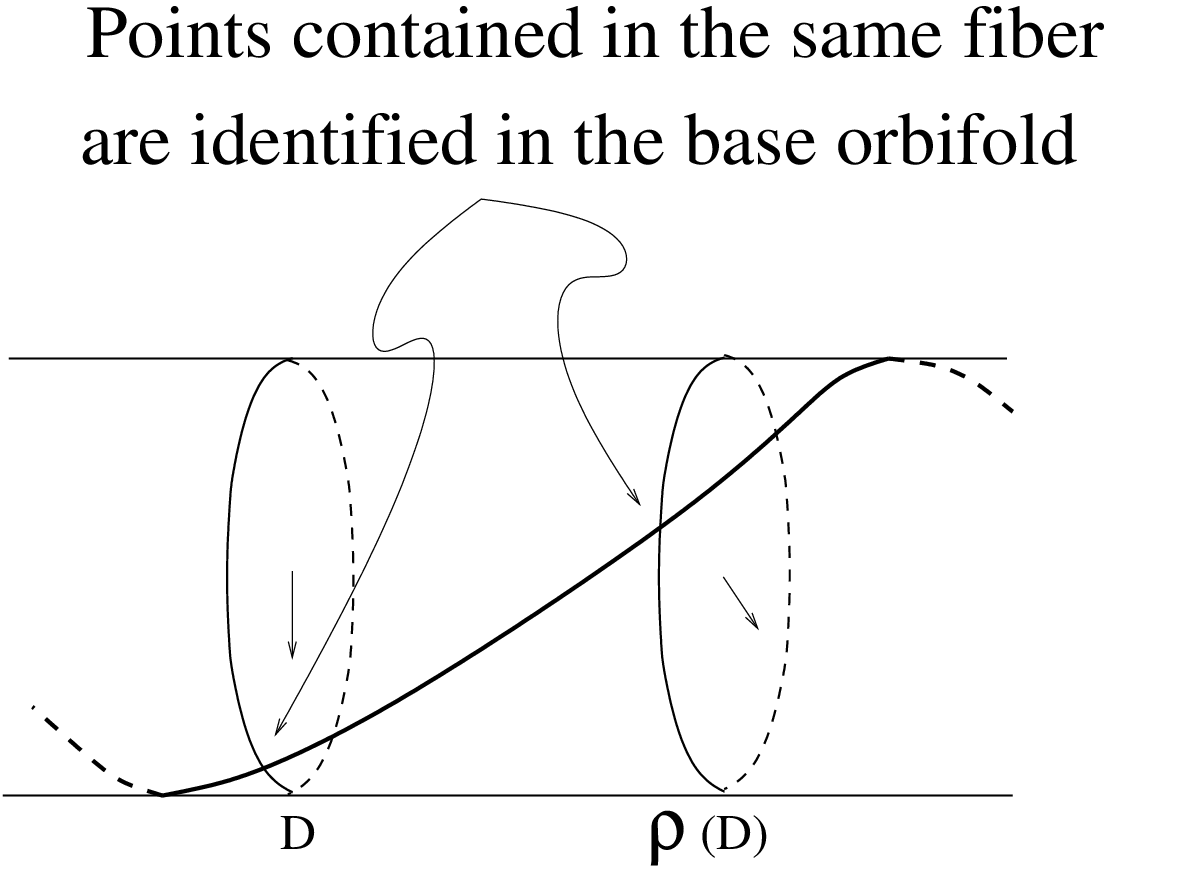}
\caption{Action of $\rho$}\label{proof}
\end{center}
\end{figure}

If $G_0$ is different from $G_1$ we have to consider a further quotient  passing from $(D^2\times S^1)/G_1$ to $((D^2\times S^1)/G_1)/(G_0/G_1)\cong (D^2\times S^1)/G_0$. 
In this case the quotient is a fibered solid pillow and the induced action on the base orbifold of $(D^2\times S^1)/G_1$ is by reflection. It is clear that the quotient of the base orbifold and the base orbifold of the quotient coincide.

The proof can be completed for the general case by considering  actions of involutions on solid pillows.  
In fact, if $\alpha$ is a fiber corresponding to a mirror reflector or to a corner reflector in $B$,  then  the stabilizer of $\alpha$ in $G$ preserves a regular neighborhood of $\alpha$ which is a solid pillow (recall Figure \ref{corner-reflector}). All elements of the  stabilizer leave invariant both $\alpha$ and the four singular points on the boundary of the pillow (these points are connected, in pairs, to the endpoints of $\alpha$). Hence, keeping in mind that $G$ preserves orientation, whenever an element does not act trivially, its square must act trivially.   We have two possibilities:  the involution either fixes  $\alpha$ pointwise or exchanges its endpoints. If the involution exchanges the two endpoints of $\alpha$  we have two possible actions on the singular points of the boundary of the solid pillow. Distinguishing these three cases we can consider the action of the stabilizer of $\alpha$ on the solid pillow and the action induced on the base orbifold. For each of the three cases the situation is different according to the parity of $p$ and $q$ where $p/q$ is the slope of the generic fiber in the solid pillow (here $p$ and $q$ are considered coprime). Finally we have to analyze nine cases, but in each we find that   the quotient of the base orbifold and the base orbifold of the quotient coincide.

\end{proof}

We consider in $S^3$  the curves  $\alpha_1=\{e^{it}j\,|\, t\in[0,2\pi]\}$  and   $\alpha_2=\{e^{it}\,|\, t\in[0,2\pi]\}$, they are the cores of $T_1$ and $T_2$ respectively and fibers of every fibration of $S^3$ described in Section~\ref{Seifert orbifold}.

We consider first some  particular cases, then we proceed to analyze  Family $1'$ and Family 1 in general; for  Family 1 we have to distinguish some subcases depending on the parity of certain indices.
The results about the quotients of $S^3$ by groups in  Family $1'$ are summarized in Table~\ref{pluto}
while the results for  Family 1 can be found in  Table~\ref{pippo}. Finally we will consider Families 11 and $11'.$

\bigskip

\noindent
\textbf{\large{Case 1}}.  $G=\Phi((C_{2h}/C_1,C_{2h}/C_1)_1)$

\medskip

This group is generated by the map sending $(z_1+z_2j)$ to $z_1+e^{2i\pi/h}z_2j$ that fixes $\alpha_2$  pointwise and is the rotation around $\alpha_2$ of angle $(2\pi)/h$. The underlying topological space of the quotient orbifold $S^3/G$ is again a three-sphere, and  the singular set of the orbifold is the image of $\alpha_2$ (a trivial knot), with singularity index equal to $h$. $G$ preserves the Hopf fibration of $S^3$ (as defined in Section \ref{Seifert orbifold}) and the images of the fiber give a fibration of $S^3/G.$ 
Applying  Lemma~\ref{quoziente-toro} to the tori $T_1$ and $T_2$ we can see that the fibration of $S^3/G$  has an exceptional (and not singular)  fiber with invariant $1/h$ (the image of $\alpha_1$) and a singular  fiber  of singular index $h$ with $0/h$ as  invariant (the image of $\alpha_2$). The base orbifold  is a 2-sphere with two cone points of index $h$. By Theorem~\ref{eulero} the Euler number is $-1/h.$ 

\bigskip

\noindent
\textbf{\large{Case 2}}.  $G=\Phi((C_{2h}/C_1,C_{2h}/C_1)_{-1})$

\medskip

Inverting the  roles of $\alpha_1$ and $\alpha_2$, the situation  is analogous to the previous case.

\bigskip

\noindent
\textbf{\large{Case 3}}.  $G=\Phi((C_{2h}/C_{h},C_{2h}/C_{h})_1)$
\medskip

The group $G$  is isomorphic to $\Z_h\times \Z_h$ and it is generated by $\rho_1$, the map sending $(z_1+z_2j)$ to $e^{2i\pi/h}z_1+z_2j$,  and $\rho_2$ the map sending  $(z_1+z_2j)$ to $z_1+e^{2i\pi/h}z_2j$. The fiber $\alpha_1$ (resp. $\alpha_2$) is fixed pointwise by $\rho_1$ (resp. $\rho_2$). We can analyze $S^3/G$ by considering successive quotients, the first one by the group generated by $\rho_1$ and the second one by the group generated by the projection of $\rho_2$ to the quotient $S^3/\langle\rho_1\rangle $. Since we quotient by cyclic groups  generated by an element with non-empty fixed point set, the underlying topological space of $S^3/G$ is again a 3-sphere and  the singular set is a link with two components, both of index $h$. 
 We consider in $S^3$ the Hopf fibration (the fiber is of type (1,1)), $G$ preserves this fibration and the images 
of the fibers in $S^3/G$ give a fibration. Using  Lemma~\ref{quoziente-toro}, we obtain  that the induced fibration is again the Hopf fibration (with two singular fibers), the base orbifold is a 2-sphere with two cone points of index $h$ and the Euler number is $-1$.  

\bigskip
\noindent
\textbf{\large{Case 4}}. $G=\Phi((C_{mr}/C_{m},C_{nr}/C_{n})_s)$ (Family $1'$)
\medskip

In this family all the groups are abelian.

We recall that in this case $m$ and $n$ are odd integers, $r$ is even and  $s$ is coprime with $r.$

\begin{Claim}\label{base-euler}
The base orbifold of the  fibration of $S^3/G$ induced by the Hopf fibration of $S^3$  is  $S^2$ with two cone points of index $nr/2$. The Euler number of the  fibration of $S^3/G$ is $-2m/nr.$
\end{Claim}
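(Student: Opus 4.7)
The plan is to lift $G$ to $S^3\times S^3$, read off the action on $\H=\C\oplus\C j$ in the coordinates $(z_1,z_2)$, and then push everything down through the Hopf fibration $\pi(z_1+z_2 j)=z_1/z_2$.

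First, set $a=\cos(2\pi/mr)+i\sin(2\pi/mr)$ and $b=\cos(2\pi/nr)+i\sin(2\pi/nr)$. The preimage of $G$ in $S^3\times S^3$ is generated by $(a,b^s)$, $(a^r,1)$ and $(1,b^r)$, where the first generator realises the isomorphism $\phi_s$ from $L/L_K$ to $R/R_K$. Using $jw=\bar w j$ for complex $w$, a direct calculation shows that $\Phi(w_1,w_2)$ sends $(z_1,z_2)$ to $(w_1\overline{w_2}\,z_1,\,w_1 w_2\,z_2)$; in particular every element of $G$ acts diagonally as $(z_1,z_2)\mapsto(\lambda z_1,\mu z_2)$. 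For an arbitrary word in the generators one finds
$$\lambda=a^{k+rj}b^{-ks-r\ell},\qquad \mu=a^{k+rj}b^{ks+r\ell},\qquad k,j,\ell\in\Z.$$

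Because $G$ acts by complex-linear diagonal isometries, it preserves every fibration of the family considered in Section \ref{Seifert orbifold}, and in particular the Hopf fibration. The induced action on $S^2=\C\cup\{\infty\}$ is rotation by $\lambda/\mu=b^{-2(ks+r\ell)}$. Since $\gcd(s,r)=1$, the integer $ks+r\ell$ attains every value as $k,\ell$ vary, so the image subgroup of $S^1$ is $\langle b^2\rangle$. Since $r$ is even, $b$ has even order $nr$, so $\langle b^2\rangle$ is cyclic of order $nr/2$, acting on $S^2$ as rotations fixing only $0$ and $\infty$. Consequently $S^2/G=S^2(nr/2,nr/2)$, and Lemma \ref{rel-orbifold-base} identifies this as the base orbifold of the fibration induced on $S^3/G$.

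For the Euler number I would apply Theorem \ref{eulero} to the covering $p:S^3\to S^3/G$ with respect to the induced fibration. The base covering $\bar p:S^2\to S^2/G$ has degree $l=nr/2$. The kernel of the homomorphism from $G$ to its rotation image has order $|G|/(nr/2)=m$; a generator of this kernel acts on a generic fibre by $(z_1,z_2)\mapsto(\lambda z_1,\lambda z_2)$ with $\lambda$ an $m$-th root of unity, hence freely, so the generic fibre covers its image with degree exactly $m$. Plugging $l=nr/2$ and $m=m$ into Theorem \ref{eulero} against the known value $e(S^3)=-1$ of the Hopf fibration yields $e(S^3/G)=-2m/(nr)$.

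The main bookkeeping obstacle is keeping the quaternionic conjugation straight when commuting $j$ past complex numbers, which determines the exact exponents of $a$ and $b$ appearing in $\lambda$ and $\mu$; after that, the key algebraic point is simply that $\gcd(s,r)=1$ combined with $r$ even forces the rotation subgroup of $S^2$ to have order $nr/2$, and every other assertion (cone points of order $nr/2$, degree of the kernel on fibres, Euler number) follows directly.
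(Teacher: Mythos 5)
Your proof is correct and follows essentially the same route as the paper's: reduce to the induced action on the base $S^2$ via Lemma~\ref{rel-orbifold-base}, identify that action as a cyclic group of rotations of order $nr/2$ fixing two points (so the base orbifold is $S^2(nr/2,nr/2)$), and apply Theorem~\ref{eulero} with base degree $l=nr/2$ and fibre degree $m$ against $e(S^3)=-1$. The only difference is that you carry out explicitly the computation of the induced rotations from the generators of the preimage in $S^3\times S^3$, which the paper compresses into ``using the formulae given in Section~\ref{Seifert orbifold}''; your use of the covering formula for Euler numbers agrees with how the paper itself applies it.
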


\begin{proof}
By Lemma~\ref{rel-orbifold-base} the  base orbifold of $S^3/G$ is the quotient of the base orbifold of the Hopf fibration (a 2-sphere) by the action induced by  $G$.
Using the formulae given in Section~\ref{Seifert orbifold} we can compute that the action induced by $G$ on the 2-sphere   corresponds to the action of a cyclic group of rotations of order $nr/2$, fixing the two points that are the images of the two fibers $\alpha_1$ and $\alpha_2$. This proves the first part of our statement. The action of $G$ on the base orbifold shows that the generic fiber of $S^3/G$ is covered by  $nr/2$ distinct fibers  of $S^3$; since the order of $G$ is $mnr/2$ each fiber of $S^3$ is an  $m$-fold cover of its image in $S^3/G.$ By Theorem~\ref{eulero}, the Euler number is $-2m/nr.$
\end{proof}

To complete the description of  the fibration of $S^3/G$, we have   to compute the invariants of the exceptional fibers.

\begin{remark}\label{coprime}
In the computation  of the invariants we  suppose that $m$ and $n$ are coprime. In fact if $\gcd(m,n)=h>1$,  the group $G$ contains the subgroup $G_0=\Phi((C_{2h}/C_{h},C_{2h}/C_{h})_1)$. 
The quotient  $S^3/G_0$ is again the 3-sphere with the Hopf fibration, but the images of $\alpha_1$ and $\alpha_2$ are singular of index $h$ (see Case 2). This implies that the local invariants of $S^3/G$ can be obtained from the invariants  of $S^3/(G/G_0)$ with the fibration induced by the Hopf fibration.  They  coincide except for those of $\alpha_1$ and $\alpha_2$. For these two fibers the numerator and the denominator of the invariant have to be multiplied by $h$.  The action of $G/G_0$ on $S^3$ coincides with the action of $G=\Phi((C_{m'r}/C_{m'},C_{n'r}/C_{n'})_s)$ where $m'=m/h$ and $n'=n/h$. To simplify the notation  we  suppose that $m$ and $n$ are coprime and at the end of the process it will be  enough to replace $m$ and $n$ with $m'$ and $n'$ and to multiply both numerator and denominator of the local invariants of the exceptional fibers by a factor of  $h.$  
\end{remark}

We now introduce some notation: 

Let $\phi$ be the map sending $z_1+z_2j$ to $e^{\left(2i\pi\frac{n-sm}{mnr}\right)}z_1+e^{\left(2i\pi\frac{n+sm}{mnr}\right)}z_2j$ and $\gamma$ the map sending $z_1+z_2j$ to $e^{\left(-2i\pi\frac{1}{n}\right)}z_1+e^{\left(2i\pi\frac{1}{n}\right)}z_2j$. By construction of $G$, the maps  $\phi$ and $\gamma$  together with the map sending $z_1+z_2j$ to $e^{\left(2i\pi\frac{1}{m}\right)}z_1+e^{\left(2i\pi\frac{1}{m}\right)}z_2j$ generate $G$; this last map equals $\phi^r\gamma^{-s}$, so $\phi$ and $\gamma$ are enough to generate $G$.
We denote  $a=\gcd(n+sm,n-sm,mnr)$,  $b_1=\gcd(\frac{n-sm}{a},\frac{mnr}{a})$ and $b_2=\gcd(\frac{n+sm}{a},\frac{mnr}{a})$.

\begin{remark}\label{relation1}
It is easy to see that $a$ and $m$ are coprime, from which it follows that $a=\gcd(n+sm,n-sm,nr)$ and $a/2=\gcd(n,s)$. We remark that  $b_1$ and $b_2$ are coprime.

From  $a=\gcd(n+sm,n-sm,nr)$ we deduce that $(n-sm)/a$ and $2n/a$ are coprime, so we obtain that  $b_1=\gcd(\frac{n-sm}{a},\frac{mr}{2})$. Since $a/2=\gcd(n,s)$, the integer $a/2$ is coprime with $mr$, so we get $2b_1=\gcd(n-sm,mr)=\gcd(n-sm,r).$
 Analogously the equality $2b_2=\gcd(n+sm,r)$ can be  obtained.
\end{remark}

\begin{Claim}\label{fixed-point-subgroup}
 The subgroup of $G$ generated by the elements with non-empty fixed point set is generated by $\phi^\frac{mnr}{ab_1}$ and $\phi^\frac{mnr}{ab_2}.$
\end{Claim}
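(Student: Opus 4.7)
The plan is to use that $G$ is abelian (Family~$1'$) and to characterise the elements with non-empty fixed point set. A nontrivial element of $G$, acting on $S^3$ by $z_1+z_2j\mapsto e^{2\pi i\alpha}z_1+e^{2\pi i\beta}z_2j$, has non-empty fixed point set if and only if $\alpha\equiv 0\pmod 1$ (in which case it fixes $\alpha_2$ pointwise) or $\beta\equiv 0\pmod 1$ (in which case it fixes $\alpha_1$ pointwise). Denoting $H_1=\Stab(\alpha_2)$ and $H_2=\Stab(\alpha_1)$ (the pointwise stabilisers in $G$), the subgroup generated by all such elements is then $\langle H_1,H_2\rangle$, which equals the product $H_1H_2$ since $G$ is abelian. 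Hence it suffices to prove $H_1=\langle f^{mnr/(ab_1)}\rangle$ and $H_2=\langle f^{mnr/(ab_2)}\rangle$.

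To identify $H_1$, consider the homomorphism $\pi_1\colon G\to \frac{1}{mnr}\Z/\Z$ sending an element to its first coordinate $\alpha$, so that $H_1=\ker\pi_1$. As $G=\langle f,g\rangle$ with $\pi_1(f)=(n-sm)/(mnr)$ and $\pi_1(g)=-1/n=-mr/(mnr)$, the image of $\pi_1$ is the cyclic subgroup generated by $\gcd(n-sm,mr)/(mnr)$. Using $\gcd(m,n)=1$ (Remark~\ref{coprime}), one has $\gcd(n-sm,m)=\gcd(n,m)=1$, so $\gcd(n-sm,mr)=\gcd(n-sm,r)=2b_1$ by Remark~\ref{relation1}. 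Thus $|\mathrm{Im}(\pi_1)|=mnr/(2b_1)$ and $|H_1|=|G|/|\mathrm{Im}(\pi_1)|=b_1$. Now the first coordinate of $f^{mnr/(ab_1)}$ is $(n-sm)/(ab_1)$, which is an integer by the defining property of $b_1$, so $f^{mnr/(ab_1)}\in H_1$; and since $f$ itself has order $mnr/a$ (the least $k$ for which both coordinates of $f^k$ vanish modulo $1$), the element $f^{mnr/(ab_1)}$ has order exactly $b_1$. A cyclic subgroup of $H_1$ of the same order as $H_1$ must coincide with $H_1$, yielding the desired identification.

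The identical reasoning, applied via the second-coordinate projection and invoking $\gcd(n+sm,r)=2b_2$ from Remark~\ref{relation1}, gives $H_2=\langle f^{mnr/(ab_2)}\rangle$, and the claim follows. The one slightly delicate point is the gcd bookkeeping, where the coprimality $\gcd(m,n)=1$ together with the parities of $m$, $n$, $r$ conspire to collapse $\gcd(n\pm sm,mr)$ to $\gcd(n\pm sm,r)=2b_1$ or $2b_2$; after this reduction the remainder is a direct order count in the finite abelian group $G$.
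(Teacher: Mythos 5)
Your proof is correct, and while it shares the paper's skeleton---observing that a nontrivial diagonal rotation has fixed points only if it acts trivially on $z_1$ or on $z_2$, so that the subgroup in question is generated by the pointwise stabilisers of $\alpha_1$ and $\alpha_2$---you identify those stabilisers by a genuinely different and cleaner mechanism. The paper proves the containment $\mathrm{Stab}(\alpha_1)\subseteq\langle f^{mnr/(ab_2)}\rangle$ by taking a general element $f^tg^u$, analysing the order of its action on $\alpha_1$, extracting the divisibility condition $(mr)/(2b_2)\mid t$ through a chain of gcd manipulations, and then computing its action on $\alpha_2$ explicitly. You instead recognise the stabiliser as the kernel of the coordinate projection onto $\tfrac{1}{mnr}\Z/\Z$, compute the image's order as $mnr/\gcd(n-sm,mr)=mnr/(2b_1)$ using $\gcd(n-sm,m)=1$ and Remark~\ref{relation1}, deduce $|\ker|=b_1$ from $|G|=mnr/2$, and conclude by exhibiting $f^{mnr/(ab_1)}$ as an element of the kernel of order exactly $b_1$ (which follows from $\mathrm{ord}(f)=mnr/a$, itself using $\gcd(b_1,b_2)=1$). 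This order count via the first isomorphism theorem replaces the paper's element-chasing entirely and is less error-prone; what it costs is only the need to know $|G|$ and the order of $f$ precisely, both of which are immediate here. The gcd bookkeeping you flag as delicate checks out: $\gcd(n-sm,mr)=\gcd(n-sm,r)$ because $n-sm\equiv n\pmod m$ and $\gcd(m,n)=1$ under the reduction of Remark~\ref{coprime}, and the identification $\gcd(n\pm sm,r)=2b_1,2b_2$ is exactly what Remark~\ref{relation1} supplies.
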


\begin{proof}

We note that $ab_1=\gcd(n-sm,mnr)$ and $ab_2=\gcd(n+sm,mnr)$. Using these equalities it is easy to see 
that $\phi^\frac{mnr}{ab_1}$ fixes  $\alpha_2$ pointwise and, since $(n+sm)/a$ and $b_1$ are coprime,  acts as a rotation of order $b_1$ on $\alpha_1$. The rotation $\phi^\frac{mnr}{ab_2}$ fixes  $\alpha_1$ pointwise and, since $(n-sm)/a$ and $b_2$ are coprime,  acts as a rotation of order $b_2$ on $\alpha_2$.

Thinking of the elements of $G$ as matrices in $\SO4$, it is easy to see that   $\alpha_1$ or $\alpha_2$ 
are the only fibers that can be fixed pointwise by an element of $G$.
Consider an element $\phi^t \gamma^u$ fixing pointwise $\alpha_1$, in this case the order of the action of the element on $\alpha_1$  has to be one. Since the order of the action of $\phi$ on $\alpha_1$ is $(mnr)/(b_2a)$, the order of $\phi^t$ on $\alpha_1$ is $(mnr)/(ab_2k)$ where $k=\gcd((mnr)/(b_2a),t);$ also,  the order of $\gamma$ on $\alpha_1$ is $n.$
If the action of $\phi^t \gamma^u$ is trivial on $\alpha_1$, then the integer $(mnr)/(ab_2k)$ is a divisor of $n$. This implies that $mr$ divides $ab_2k=(a/2)(2b_2k)$; using  $\gcd(a/2,m)=1$, $\gcd(a/2,r)=1$  and  $b_2=\gcd((n+sm)/2,r/2)$, we obtain that $(mr)/(2b_2)$ divides $k$ and consequently divides $t$. Moreover the action of $\phi^t \gamma^u$ on $\alpha_1$ is given by the multiplication of $z_2$ by the element $e^{\left(t\frac{n+sm}{mnr}+\frac{u}{n}\right)2\pi i}$, so  $t\frac{n+sm}{mnr}+\frac{u}{n}$ is an integer.
The action on $\alpha_2$ is given by the  multiplication  of $z_1$ by the element:

$$ e^{\left(t\frac{n-sm}{mnr}-\frac{u}{n}\right)2\pi i}=e^{\left(t\frac{n-sm}{mnr}+t\frac{n+sm}{mnr}-t\frac{n+sm}{mnr}-\frac{u}{n}\right)2\pi i}=e^{\left(t\frac{n-sm}{mnr}+t\frac{n+sm}{mnr}\right)2\pi i}=e^{\left(\frac{2tn}{mnr}\right)2\pi i}$$

Since  $(mr)/(2b_2)$ divides  $t$ we obtain that $\frac{2tn}{mnr}=\frac{t'}{b_2}$ where $t'$ is an integer. This implies   that $\phi^t \gamma^u$ is a power of $\phi^\frac{mnr}{ab_2}.$ 

Analogously we can prove that any element fixing pointwise $\alpha_2$ is a power of $\phi^\frac{mnr}{ab_1}.$

\end{proof}

\begin{Claim} 
The quotient orbifold  $S^3/\langle \phi^\frac{mnr}{ab_1},\phi^\frac{mnr}{ab_2}\rangle $ has a 3-sphere as  underlying topological space. Only the projections  of the fibers $\alpha_1$ and $\alpha_2$ might be  singular, respectively,  of  singularity index $b_2$ and $b_1$. 

The base orbifold of the fibration induced on the quotient by  the Hopf fibration is a 2-sphere with two possible singular points of index $b_1b_2$.  The homology class of the fiber in the tubular neighborhood of $\alpha_1$ is $b_2\mu+b_1\lambda$ where $\mu$ is a meridian and $\lambda$ a longitude; for $\alpha_2$ the roles of $b_1$ and $b_2$ are inverted.
\end{Claim}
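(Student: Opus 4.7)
The plan is as follows. First observe that $H=\langle f^{mnr/(ab_1)}, f^{mnr/(ab_2)}\rangle$ is cyclic, being generated by two powers of $f$; since $\gcd(b_1,b_2)=1$, one has $H=\langle\rho\rangle$ with $\rho=f^{mnr/(ab_1b_2)}$. Because $f$ has order $mnr/a$, the order of $\rho$ is exactly $b_1b_2$. Explicitly $\rho(z_1+z_2j)=\zeta^g z_1+\zeta^d z_2 j$, where $\zeta=e^{2\pi i/(b_1b_2)}$, $g=(n-sm)/a$ and $d=(n+sm)/a$. Using Remark~\ref{relation1} together with $\gcd(m,n)=1$ (see Remark~\ref{coprime}) one checks that $\gcd(g,d)=1$, $b_1\mid g$, $b_2\mid d$ and $\gcd(b_1,d)=\gcd(b_2,g)=1$, so $\gcd(g,d,b_1b_2)=1$ and $\rho$ is in the form required by Lemma~\ref{quoziente-toro}.

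Since $\gcd(b_1,b_2)=1$ we decompose $H\cong\Z_{b_1}\times\Z_{b_2}$, where $\langle\rho^{b_2}\rangle\cong\Z_{b_1}$ is a rotation around $\alpha_2$: the relation $b_2\mid d$ gives $\zeta^{db_2}=1$, so $\rho^{b_2}$ fixes $\alpha_2$ pointwise, while on a normal disk it acts as multiplication by $\zeta^{gb_2}$, of order $b_1$. Symmetrically $\langle\rho^{b_1}\rangle\cong\Z_{b_2}$ is a rotation around $\alpha_1$. Taking the quotient in two steps, first by $\langle\rho^{b_1}\rangle$ produces again a 3-sphere, with the image of $\alpha_1$ as singular set of index $b_2$, since the quotient of $S^3$ by a cyclic group of rotations around a great circle is $S^3$. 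The induced action of $\langle\rho^{b_2}\rangle$ on this quotient is still a rotation around the image of $\alpha_2$: one checks using $\gcd(b_1,b_2)=1$ that the fixed set of $\rho^{b_2}$ descends exactly to the image of $\alpha_2$. Hence the second quotient is again $S^3$, with the images of $\alpha_1$ and $\alpha_2$ as singular curves of indices $b_2$ and $b_1$ respectively. Any $\rho^k$ with $0<k<b_1b_2$ satisfying $b_1\nmid k$ and $b_2\nmid k$ fixes no point (otherwise $\gcd(g,d)=1$ would force $b_1b_2\mid k$), so these are the only possible singularities.

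For the base orbifold, by Lemma~\ref{rel-orbifold-base} it is $S^2/H$, where $H$ acts on $S^2\cong\C\cup\{\infty\}$ through $\lambda=z_1/z_2\mapsto\zeta^{g-d}\lambda=e^{-4\pi i\, sm/(ab_1b_2)}\lambda$. Using $a=2\gcd(n,s)$ and $\gcd(m,a)=1$ together with the coprimality estimates above, one verifies that $\gcd(2sm,ab_1b_2)=a$, so this rotation has order exactly $b_1b_2=|H|$. Thus $H$ acts faithfully on $S^2$ with only two fixed points $\lambda=0$ (image of $\alpha_1$) and $\lambda=\infty$ (image of $\alpha_2$), and the quotient is a $2$-sphere with two cone points of index $b_1b_2$.

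For the homology class of the fiber near $\alpha_1$ we apply Lemma~\ref{quoziente-toro} to $T_1$. With $e=b_1b_2$ one has $\gcd(d,e)=b_2$ and $e'=b_1$, so the meridian $\mu$ of $T_1$ maps to $b_2\mu'$ and the longitude $\lambda$ maps to $-g\overline{d'}\mu'+b_1\lambda'$; hence the Hopf fiber $\mu+\lambda$ maps to $(b_2-g\overline{d'})\mu'+b_1\lambda'$. Because $b_1\mid g$, the integer $g\overline{d'}/b_1$ is well defined and replacing $\lambda'$ by $\lambda'-(g\overline{d'}/b_1)\mu'$ normalises the fiber class to $b_2\mu'+b_1\lambda'$. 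The analogous computation for $T_2$ swaps the roles of $g$ and $d$, hence of $b_1$ and $b_2$, giving $b_1\mu'+b_2\lambda'$. The main technical difficulty is precisely this bookkeeping: checking from the definitions all the coprimality and divisibility relations among $a,b_1,b_2,g,d,m,n,r,s$ that underlie every step above.
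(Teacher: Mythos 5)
Your argument is correct and follows essentially the same route as the paper: the paper's proof of this Claim is a one-line reference back to Case 3, i.e.\ splitting the group into two cyclic rotation groups about $\alpha_1$ and $\alpha_2$, taking successive quotients, and reading off the fiber classes from Lemma~\ref{quoziente-toro}; you carry out exactly this, only more explicitly, and the identification $H=\langle f^{mnr/(ab_1b_2)}\rangle$ together with the relations $b_1\mid g$, $b_2\mid d$, $\gcd(b_1,d)=\gcd(b_2,g)=\gcd(g,d)=1$ is precisely the bookkeeping needed. One sentence is garbled, however: since $\alpha_2=\{z_2=0\}$ is parametrized by $z_1$, the element $\rho^{b_2}$ fixes $\alpha_2$ pointwise because $b_1\mid g$ forces $\zeta^{gb_2}=1$ (not because $b_2\mid d$; in fact $\zeta^{db_2}=e^{2\pi i d/b_1}\neq 1$ whenever $b_1>1$, as $\gcd(b_1,d)=1$), and it acts on a normal disk by $\zeta^{db_2}$, of order $b_1$ --- so $g$ and $d$ are transposed in that justification, although the conclusions you draw from it are the correct ones and the remainder of the proof is consistent with them.
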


\begin{proof}
Generalize the argument used in  Case 3; $\phi^{\frac{mnr}{ab_1}}$ plays the role of $\rho_2$ and $\phi^{\frac{mnr}{ab_2}}$ plays the role of $\rho_1$. Note that the two generators now have orders which are coprime (whereas the orders were equal in Case 3).
\end{proof}

Now consider the group $G_1=G/\langle \phi^\frac{mnr}{ab_1},\phi^\frac{mnr}{ab_2}\rangle $ which acts  on $S^3/\langle \phi^\frac{mnr}{ab_1},\phi^\frac{mnr}{ab_2}\rangle $ respecting the fibration. The quotient $(S^3/\langle \phi^\frac{mnr}{ab_1},\phi^\frac{mnr}{ab_2}\rangle )/G_1$ with the fibration induced by the quotient is equivalent to $S^3/G$ with the fibration induced by the Hopf fibration of $S^3$. The group $G_1$ is generated by $\overline \phi$ and $\overline{\gamma}$, the projections of $\phi$ and $\gamma$; their action on the 3-sphere (the underlying topological space of $S^3/\langle \phi^\frac{mnr}{ab_1},\gamma^\frac{mnr}{ab_2}\rangle $) is the following:

\medskip

$$
\begin{array}{rl}
\overline \phi (z_1+z_2j)=& e^{\left(2i\pi b_2\frac{n-sm}{mnr}\right)}z_1+e^{\left(2i\pi b_1\frac{n+sm}{mnr}\right)}z_2j\\
\overline \gamma (z_1+z_2j)= &e^{\left(-2i\pi b_2\frac{1}{n}\right)}z_1+e^{\left(2i\pi b_1\frac{1}{n}\right)}z_2j
\end{array}
$$
\medskip

We consider the following factorization of $n$:

$$n=\prod_{p_k|a} p_k^{u_k} \cdot \prod_{q_l\nmid a} q_l^{v_l} $$

where the $p_k$'s and $q_l$'s are distinct (odd) prime factors of $n$. We define $\nu=2\left(\prod_{p_k|a} p_k^{u_k}\right)/a$. It follows that $(2n)/(a\nu)=\prod_{q_l\nmid a} q_l^{v_l}$ and $a/2$ are coprime.

\begin{center}
\begin{table}
\begin{tabular}{|ll|}
\hline
\multicolumn{2}{|c|}{} \\
\multicolumn{2}{|c|}{$G=\Phi((C_{mr}/C_{m},C_{nr}/C_{n})_s)$ (Family $1'$)}\\
\multicolumn{2}{|c|}{and}\\
\multicolumn{2}{|c|}{ $G=\Phi((D^*_{2mr}/C_{m},D^*_{2nr}/C_{n})_s)$ (Family $11'$)} \\
\multicolumn{2}{|c|}{} \\
\hline
&\\
We define:  & $h=\gcd(m,n)$ \\
						&  $m'=\frac{m}{h}$ \\
					 &  $n'=\frac{n}{h}$ \\ 
					 & $a=\gcd(n'-sm',m'+sn',m'n'r)$ \\
 & $b_1=\gcd(\frac{n'-sm'}{a},\frac{m'n'r}{a})$ \\
 &  $b_2=\gcd(\frac{n'+sm'}{a},\frac{m'n'r}{a})$ \\   
 &   $\nu $ minimal positive integer s.t.  $\gcd(\frac{2n'}{a\nu },\frac{a}{2})=1$ \\
& $d=\frac{\nu^2 a(n'+sm')+2n'm'r}{2a\nu b_2}$ \\ 
 & $g=\frac{\nu^2 a(n'-sm')-2n'm'r}{2a\nu b_1}$  \\
 & $f=\frac{m'n'r}{2b_1b_2}$ \\
 & $\overline{g}$ s.t. $g\overline{g}\equiv 1 \, \mod f$ \\
& $\overline{c}$ s.t. $\left(\nu s+r\frac{2n'}{a\nu }\right)\overline{c}\equiv 1 \, \mod n'r$ \\
&\\
\hline
\multicolumn{2}{|c|}{} \\
\multicolumn{2}{|p{12cm}|} {The orbifold $S^3/\Phi((C_{mr}/C_{m},C_{nr}/C_{n})_s)$ fibers over $S^2\left(\frac{nr}{2},\frac{nr}{2}\right)$ with local invariants $\frac{d \overline{c}b_2 h}{\frac{nr}{2}}$ and $-\frac{g \overline{c}b_1h}{\frac{nr}{2}}$ and Euler number $-\frac{2m}{nr}$.}\\
&\\
\multicolumn{2}{|p{12cm}|} {The underlying topological space of $S^3/\Phi((C_{mr}/C_{m},C_{nr}/C_{n})_s)$ is the lens space  $ L(f,d\overline{g})$.}\\
&\\
\multicolumn{2}{|p{12cm}|} {The singular set  of $S^3/\Phi((C_{mr}/C_{m},C_{nr}/C_{n})_s)$ is a link with at most two components of singular index $b_2h$ and $b_1h$ (if the singular index is 1 the corresponding component consists of non-singular points).}\\
\multicolumn{2}{|c|}{} \\
\hline
\multicolumn{2}{|c|}{} \\
\multicolumn{2}{|p{12cm}|} {The orbifold $S^3/\Phi((D^*_{2mr}/C_{m},D^*_{2nr}/C_{n})_s)$ fibers over $D^2\left(;\frac{nr}{2},\frac{nr}{2}\right)$ with local invariants $\frac{d \overline{c}b_2 h}{\frac{nr}{2}}$ and  $-\frac{g \overline{c}b_1h}{\frac{nr}{2}}$ and Euler number $-\frac{m}{nr}$.}\\
&\\
\multicolumn{2}{|p{12cm}|} {The underlying topological space of $S^3/\Phi((D^*_{2mr}/C_{m},D^*_{2nr}/C_{n})_s)$ is the 3-sphere.}\\
\multicolumn{2}{|c|}{} \\
\hline
\end{tabular}
\bigskip
\caption{Families $1'$ and $11'$.}
\label{pluto}
\end{table}
\end{center}

\bigskip

\begin{Claim} 
 The group $G_1$ is generated by the element $\overline \phi^\nu \overline \gamma^{\frac{2n}{\nu a}}$. 
\end{Claim}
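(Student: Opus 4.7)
The plan is to show that the order of $\overline{w}$ in $G_1$ equals $|G_1|$; since $G$ (and hence $G_1$) is abelian---its generators $f$ and $g$ act diagonally on $(z_1,z_2)$ and so commute---this equality of orders will force $G_1=\langle\overline{w}\rangle$.

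To compute $|G_1|$: the element $f$ acts on $z_1$ and on $z_2$ as rotations of orders $mnr/(ab_1)$ and $mnr/(ab_2)$, so the order of $f$ in $G$ equals $\operatorname{lcm}(mnr/(ab_1),mnr/(ab_2))=mnr/a$ by the coprimality of $b_1$ and $b_2$. For the same reason $\langle f^{mnr/(ab_1)},f^{mnr/(ab_2)}\rangle=\langle f^{mnr/(ab_1 b_2)}\rangle$, a cyclic subgroup of order $b_1 b_2$. Hence $|G_1|=|G|/(b_1 b_2)=mnr/(2b_1 b_2)$.

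To compute the order of $\overline{w}$: since $|\langle f\rangle\cap\langle g\rangle|=|\langle f\rangle|\,|\langle g\rangle|/|G|=2n/a$, this intersection is simultaneously $\langle f^{mr/2}\rangle$ and $\langle g^{a/2}\rangle$, and there is an integer $\ell$, coprime to $2n/a$, such that $f^{mr/2}=g^{a\ell/2}$ (explicitly computed by comparing the actions of $f^{mr/2}$ and $g^{a\ell/2}$ on $z_1$). The condition $\overline{w}^k=1$, namely $f^{\nu k}g^{(2n/(\nu a))k}\in\langle f^{mnr/(ab_1 b_2)}\rangle$, therefore amounts to the existence of integers $j,t$ satisfying
\[
\nu k - j\,\frac{mnr}{ab_1 b_2}\equiv t\,\frac{mr}{2}\pmod{\frac{mnr}{a}},\qquad \frac{2n}{\nu a}\,k\equiv -t\,\frac{a\ell}{2}\pmod{n}.
\]
Using $\gcd(\nu,2n/(\nu a))=1$ (because $\nu$ and $2n/(\nu a)$ involve respectively only the disjoint sets of primes $\{p_i\}$ and $\{q_j\}$) together with the assumption $\gcd(2n/(\nu a),a/2)=1$, solving this system shows that the smallest positive $k$ is exactly $mnr/(2b_1 b_2)$, matching $|G_1|$.

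The main obstacle is executing the congruence analysis of the final step cleanly: one must track the interplay between the intersection relation $f^{mr/2}=g^{a\ell/2}$ and the quotient relation $\overline{f}^{mnr/(ab_1 b_2)}=1$, respecting the parity conventions ($m,n,s$ odd and $r$ even, so that $a/2$ is odd), in order to exclude every $k$ smaller than $mnr/(2b_1 b_2)$.
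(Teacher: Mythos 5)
Your overall strategy---reduce, in the abelian group $G_1$, to showing that the order of $\overline w$ equals $|G_1|=mnr/(2b_1b_2)$---is the same as the paper's, and your computation of $|G_1|$ (identifying $\langle f^{mnr/(ab_1)},f^{mnr/(ab_2)}\rangle$ with the cyclic group $\langle f^{mnr/(ab_1b_2)}\rangle$ of order $b_1b_2$) is correct. The gap is in the second half. The entire mathematical content of the claim is the determination of the order of $\overline w$, and you reduce this to a two-congruence system in $k$, $j$, $t$ (built from the intersection relation $f^{mr/2}=g^{a\ell/2}$ and the quotient relation) which you then do not solve: the assertion that the smallest positive solution is $k=mnr/(2b_1b_2)$ is precisely what has to be proved, and you yourself flag ``executing the congruence analysis cleanly'' as the main obstacle. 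As written the proof is therefore incomplete at its crucial step, and the elimination of $j$ and $t$ (including pinning down $\ell$ and tracking the parity conventions) is genuinely delicate, not routine.

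The paper sidesteps this bookkeeping entirely: it computes the orders of the two commuting factors separately in $G_1$---$\overline f^{\nu}$ has order $mnr/(a\nu b_1 b_2)$ and $\overline g^{2n/(\nu a)}$ has order $a\nu/2$---and observes that these two integers are coprime, using $\gcd(a,m)=1$, $\gcd(a/2,r)=1$, and the fact that $a\nu/2$ and $2n/(a\nu)$ have disjoint prime supports by the definition of $\nu$. Since commuting elements of coprime orders have a product whose order is the product of the orders, the order of $\overline w$ is $mnr/(2b_1b_2)=|G_1|$ immediately. I would recommend replacing your joint congruence system by this factorwise computation; it uses exactly the coprimality facts you already isolated, but avoids having to solve any simultaneous congruences.
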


\begin{proof}

The order of $G_1$ is $(mnr)/(2b_1b_2)$. The order of $\overline \phi^\nu $ is $(mnr)/(a\nu b_1b_2)$ 
and the order of $\overline{\gamma}^{\frac{2n}{\nu a}}$ is $(a\nu )/2.$ Since $\gcd(a,m)=1$ and $\gcd(a/2,r)=1$, the orders of $\overline{\phi}^\nu $ and $\overline{\gamma}^{\frac{2n}{\nu a}}$ are coprime and their product has the same order as $G_1$.
\end{proof}

We define now:

$$ g=\frac{\nu ^2a(n-sm)-2mnr}{2a\nu  b_1},\, d=  \frac{\nu ^2a(n+sm)+2mnr}{2a\nu  b_2},\, f = \frac{mnr}{2b_1b_2}. $$

The map $\overline \phi^\nu  \overline \gamma^{\frac{2n}{\nu a}}$ sends $z_1+z_2j$ to $e^{2\pi\frac{g}{f}i}z_1+e^{2\pi\frac{d}{f}i}z_2j$.  Since this map acts freely and has order $f$, the integers $g$ and $f$ are coprime, so are $d$ and $f$. 
We denote by $\overline d$ and $\overline g$ two integers such that  $d\overline{d}\equiv 1\, \mod f$ and  $g\overline{g}\equiv 1\, \mod f.$

\begin{remark}\label{relation2} We note that $\gcd(f,db_2-gb_1)=m$. To get this equality one considers that $db_2-gb_1=m\left(\nu  s+r \frac{2n}{a\nu }\right)$ and proves that $nr$ and $\nu  s+r \frac{2n}{a\nu }$ are coprime, using the following outline: any prime factor of $nr=(r)(2n/a\nu)(a/2)(\nu)$ must be a factor of one of the parenthesized integers, and by definition, any prime factor of $\nu$ is an odd prime factor of $a$, hence, a factor of $a/2$, so it suffices to show that $\nu  s+r \frac{2n}{a\nu }$ has no prime factors in common with $r,$ $2n/a\nu,$ or $a/2.$

Therefore,  using  $\gcd(\overline{d},f)=1,$ $\gcd(\overline{g},f)=1$ and $\gcd(b_1,b_2)=1$, it follows that  $m=\gcd(f,b_2-g\overline{d}b_1)=\gcd(fb_1,b_2-g\overline{d}b_1)$ and $m=\gcd(f,d\overline{g}b_2-b_1)=\gcd(fb_2,d\overline{g}b_2-b_1)$.

\end{remark}

\begin{Claim} 
The  fibered orbifold $S^3/G$ has as underlying topological space a lens space $L(f,d\overline{g}).$  The local invariants of the two exceptional fibers are $\frac{d\overline{c}b_2}{\frac{nr}{2}}$ and $\frac{-g\overline{c}b_1}{\frac{nr}{2}}$ where $\overline{c}$ is the inverse of $\nu  s+r \frac{2n}{a\nu }\,\mod nr.$
\end{Claim}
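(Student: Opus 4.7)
The approach is to realize $S^3/G$ as a further free quotient of the orbifold from the preceding claim by the cyclic group $\langle\overline{w}\rangle$ of order $e$. Freeness of the action forces $\gcd(d,e)=\gcd(g,e)=1$, and since $\overline{w}$ preserves both the Heegaard decomposition $S^3=T_1\cup T_2$ and the Hopf fibration, Lemma~\ref{quoziente-toro} applies separately to each of $T_1$ and $T_2$.

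For the underlying topological space, I would apply the lemma to $T_1$ to obtain a basis $(\mu_1',\lambda_1')$ of the boundary of $T_1/\langle\overline{w}\rangle$ with $[\mu_1]\mapsto[\mu_1']$ and $[\lambda_1]\mapsto -g\overline{d}[\mu_1']+e[\lambda_1']$, and the analogous statement for $T_2$ (swapping the roles of $d$ and $g$) to obtain a basis $(\mu_2',\lambda_2')$. Using the identifications $\mu_1=\lambda_2$ and $\lambda_1=\mu_2$ on the common Heegaard torus, comparing pushforwards expresses the meridian of the second solid torus of the quotient as $[\mu_2']=-g\overline{d}[\mu_1']+e[\lambda_1']$. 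This exhibits $S^3/\langle\overline{w}\rangle$ as the lens space $L(e,-g\overline{d})$. Since $(g\overline{d})(d\overline{g})\equiv 1\pmod{e}$, the standard equivalence $L(p,q)\cong L(p,\pm q^{\pm 1})$ rewrites this as $L(e,d\overline{g})$.

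For the local invariants I would use that, by the previous claim, the image of $\alpha_1$ in the intermediate quotient is singular of index $b_2$, with generic fiber near $\alpha_1$ represented by $b_2\mu+b_1\lambda$. Applying Lemma~\ref{quoziente-toro} to the subsequent free action of $\overline{w}$, the image fiber has slope $(b_2-g\overline{d}b_1)/(eb_1)$, which by Remark~\ref{relation2} reduces to a fraction with denominator $eb_1/m=nr/(2b_2)$. Recording the pre-existing singularity factor $b_2$ in both numerator and denominator yields the invariant $\overline{a}b_2/(nr/2)$, where $a=(b_2-g\overline{d}b_1)/m$. To match the stated formula I would use the identity $db_2-gb_1=mf$ from Remark~\ref{relation2}: multiplying by $\overline{d}$ modulo $e$ gives $b_2-g\overline{d}b_1\equiv\overline{d}mf$, whence $a\equiv\overline{d}f$ and $\overline{a}\equiv d\overline{f}$ modulo the denominator. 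The computation for $\alpha_2$ is identical after swapping $d\leftrightarrow g$ and $b_1\leftrightarrow b_2$, the sign in the final formula arising from the skew orientation of the two Heegaard solid tori.

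The main obstacle is the careful modular bookkeeping in the last step: passing from the congruence $b_2-g\overline{d}b_1\equiv\overline{d}mf\pmod{e}$ to the congruence $a\equiv\overline{d}f\pmod{nr/(2b_2)}$ after integer division by $m$ requires exploiting the coprimality $\gcd(m,n)=1$ assumed in Remark~\ref{coprime} together with the divisibility relations $2b_1=\gcd(n-sm,r)$ and $2b_2=\gcd(n+sm,r)$ from Remark~\ref{relation1}.
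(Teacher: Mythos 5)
Your proposal follows the same route as the paper's proof: pass to the intermediate quotient of the preceding claim, let the free cyclic group $G_1=\langle\overline w\rangle$ of order $e$ act on the resulting $3$-sphere, apply Lemma~\ref{quoziente-toro} separately to the two solid tori, and reduce the resulting slope by means of Remark~\ref{relation2}. Your treatment of the underlying space is in fact more explicit than the paper's (which only remarks that the topological type ``can be understood''), and the identification $L(e,-g\overline d)\cong L(e,d\overline g)$ via $(g\overline d)(d\overline g)\equiv 1\pmod e$ is correct.

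The one step that does not close as written is the final congruence. From $db_2-gb_1=mf$ you multiply by $\overline d$ and divide by $m$ to get $a\equiv\overline d f$; but writing $d\overline d=1+ce$ this computation gives $a=\overline d f-cb_2(e/m)$, i.e.\ a congruence modulo $eb_2/m=nr/(2b_1)$, whereas $\overline a$ must be an inverse of $a$ modulo the denominator $eb_1/m=nr/(2b_2)$. Since $\gcd(b_1,b_2)=1$, the first modulus controls the second only through their common divisor $nr/(2b_1b_2)$, so for $b_1>1$ your argument pins down the numerator $d\overline f\,b_2$ only modulo $nr/(2b_1)$ rather than modulo $nr/2$ as required. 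The repair is to multiply by $d$ instead of $\overline d$: one gets $ad=\bigl(db_2-gb_1-cegb_1\bigr)/m=f-cg\,(eb_1/m)$, hence $ad\equiv f\pmod{eb_1/m}$ and therefore $ad\overline f\equiv 1\pmod{eb_1/m}$, since $eb_1/m$ divides $nr$ and $f\overline f\equiv 1\pmod{nr}$. This is exactly the paper's assertion that $d\overline f$ inverts the numerator of the reduced slope modulo $eb_1/m$; with this correction your argument coincides with the paper's proof, the second fiber being handled symmetrically with the sign coming from $g(b_1-d\overline g b_2)/m\equiv -f$.
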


\begin{proof}
The action of $G_1$ on $S^3/\langle \phi^\frac{mnr}{ab_1},\phi^\frac{mnr}{ab_2}\rangle $, whose underlying topological space is a 3-sphere, is explicit and   the underlying topological space of $S^3/G$ can be understood.

To compute the invariants we consider the two tori $T_1$ and $T_2$ decomposing  $S^3/\langle \phi^\frac{mnr}{ab_1},\phi^\frac{mnr}{ab_2}\rangle $ and apply  Lemma~\ref{quoziente-toro}.
For $T_1$ the fiber is homologous in the boundary to $b_2\mu+b_1\lambda$,  the quotient of $T_1$ by $G_1$ is a solid torus and the fiber induced by the quotient has a slope $(b_2-g\overline{d}b_1)/(fb_1)$. By Remark~\ref{relation2}, the slope can be written as $\left(\frac{b_2-g\overline{d}b_1}{m}\right)/\left(\frac{fb_1}{m}\right)$ where denominator and numerator are coprime. We denote by  $\overline{c}$ an inverse of  $(db_2-gb_1)/m$ $\mod nr$ (by Remark~\ref{relation2}   $(db_2-gb_1)/m$ and $nr$ are coprime). The integer $d \overline{c}$  is an  inverse $\mod(fb_1)/m$ of the numerator of the slope and gives the invariant.  Using the definition of  $f$, $g$ and $d$ we obtain the thesis; for $T_2,$  the roles of $d$ and $g$ are reversed.
 \end{proof}

\bigskip
\noindent
\textbf{\large{Case 5}}. $G=\Phi((C_{2mr}/C_{2m},C_{2nr}/C_{2n})_s)$ (Family $1$)
\medskip

Also in this case by the same proof of the previous one we can easily obtain the base orbifold and the Euler number.

\medskip

\textbf{Claim $\mathbf{1^{\prime}}$.}
\textit{The base orbifold of the  fibration of $S^3/G$ induced by the Hopf fibration of $S^3$  is  $S^2$ with two singular points of index $nr$. The Euler number of the  fibration of $S^3/G$ is $-2m/nr.$}

\medskip

To compute the local invariants we have to distinguish some subcases depending on the parity  of certain indices.
A summary of the situation is given in Table~\ref{pippo}. In all subcases the strategy is similar to that of Case 4, but Claim 2 has to be substantially modified under certain conditions.
In the following we describe which subcases we have to consider and how  Claim 2 has to be modified for the critical subcases; where the computation follows exactly the same strategy as  Case 4, we skip details and  report directly the final results in Table~\ref{pippo}.

By the same argument of Remark~\ref{coprime},  we can suppose in our computation that $m$ and $n$ are coprime. We can also suppose that $s$ is odd, in fact if $s$ is even, $r$ has to be odd and  $s$ can be  substituted with $r-s$ obtaining a conjugate  group by Proposition~\ref{classificationS3} (using conjugation by $\Phi(1,j)\in\SO4$).

We introduce now some notation: 

Let $\phi$ be the map sending $z_1+z_2j$ to $e^{\left(2i\pi\frac{n-sm}{2mnr}\right)}z_1+e^{\left(2i\pi\frac{n+sm}{2mnr}\right)}z_2j$ and $\gamma$ the map sending $z_1+z_2j$ to $e^{\left(-2i\pi\frac{1}{2n}\right)}z_1+e^{\left(2i\pi\frac{1}{2n}\right)}z_2j$. These maps generate $G$. 

We denote  $a=\gcd(n+sm,n-sm,2mnr)$,  $b_1=\gcd(\frac{n-sm}{a},\frac{2mnr}{a})$ and $b_2=\gcd(\frac{n+sm}{a},\frac{2mnr}{a})$.

The first difference between different subcases is pointed out by the following proposition.

\begin{Proposition}
If $m$ and $n$ are both odd then we have $a=2\gcd(n,s)$, otherwise $a=\gcd(n,s)$.
\end{Proposition}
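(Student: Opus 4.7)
The plan is to pin $a$ down by elementary gcd manipulations and then decide its exact value by a parity case analysis, exploiting the two standing reductions made just before the proposition: $\gcd(m,n)=1$ and $s$ odd.

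First I would bound $a$ from above and below. Since $a$ divides both $n+sm$ and $n-sm$, it divides their sum $2n$ and their difference $2sm$; hence $a \mid \gcd(2n,2sm) = 2\gcd(n,sm)$. Using $\gcd(m,n)=1$ this equals $2\gcd(n,s)$. Conversely $\gcd(n,s)$ divides $n$ and divides $sm$, so it divides both $n\pm sm$, and obviously it divides $2mnr$; thus $\gcd(n,s)\mid a$. So $a$ is either $\gcd(n,s)$ or $2\gcd(n,s)$, and the whole question reduces to deciding whether $2\mid a$.

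Second, I would observe that since $s$ is odd, every common divisor of $n$ and $s$ is odd, so $\gcd(n,s)$ is odd. Therefore $2\mid a$ is equivalent to $a = 2\gcd(n,s)$, and $2\nmid a$ is equivalent to $a=\gcd(n,s)$.

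Third, I would do the parity split. If $m$ and $n$ are both odd, then $sm$ is odd (since $s$ is odd), so both $n+sm$ and $n-sm$ are even; together with $2\mid 2mnr$, this gives $2\mid a$, hence $a=2\gcd(n,s)$. In the remaining case, $\gcd(m,n)=1$ forces exactly one of $m,n$ to be even, so $n$ and $sm$ have opposite parities; then $n+sm$ is odd, which immediately prevents $2\mid a$, giving $a=\gcd(n,s)$.

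There is no real obstacle here; the only thing to be careful about is that the case ``both $m,n$ even'' is excluded by coprimality, so the two cases in the statement are exhaustive.
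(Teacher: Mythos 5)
Your proof is correct and follows essentially the same route as the paper's: both establish $\gcd(n,s)\mid a\mid 2\gcd(n,s)$ (the paper via $a\mid 2n$ and $\gcd(a,m)=1$ hence $a\mid 2s$; you via $a\mid\gcd(2n,2sm)=2\gcd(n,s)$, which is the same computation) and then settle the factor of $2$ by the same parity analysis of $n\pm sm$ using that $s$ is odd and $\gcd(m,n)=1$.
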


\begin{proof}
It is evident  that $\gcd(n,s)$ divides $a$.
The integer $a$ divides $2n=(n+sm)+(n-sm)$.  Since we suppose  $m$ and $n$ coprime, we have that $\gcd(a,m)=1$. This implies (using $2sm=(n+sm)-(n-sm)$) that $a$ also  divides $2s$. 
Hence $a$ divides $2\gcd(n,s)$, regardless of the parities of $m$ and $n$. 

If  $m$ or $n$ is even, then  (since $s$ is assumed to be odd) $n+sm$ is odd, and hence $a$ is odd. It then follows that $a$ divides $\gcd(n,s)$, so we get $a=\gcd(n,s)$.

If both $m$ and $n$ are odd, $a$ has to be even and $2\gcd(n,s)$ divides $a$, so we conclude that $a=2\gcd(n,s)$.

\end{proof}

If  $m$ or $n$ is even, then $a=\gcd(n,s)$ and the computation of the invariants follows exactly the same strategy as the previous case; the results are reported in Table~\ref{pippo}.

If $m$ and $n$ are both odd the situation is more complicated since  the statement of the  analogue  of Claim~\ref{fixed-point-subgroup}  depends on the 
parity of the
 indices $r/b_i$. 

\medskip

\textbf{Claim $\mathbf{2^{\prime}}$.}
\textit{Suppose that $m$ and $n$ are odd, we define $f_i=\gcd(2,r/b_i)$; the subgroup of $G$ generated by the elements with non-empty fixed point set is generated by the maps $(z_1+z_2j)\rightarrow (e^{\frac{2i\pi}{f_2b_2}}z_1+z_2j)$ and  $(z_1+z_2j)\rightarrow (z_1+e^{\frac{2i\pi}{f_1b_1}}z_2j)$}

\begin{proof}
In any case the element $\phi^{\frac{2mnr}{ab_2}}$ acts trivially on $\alpha_1$ and acts as a rotation of order $b_2$ on $\alpha_2$.  
Suppose that   $\phi^t\gamma^u$ acts trivially on $\alpha_1$, if we denote by $k$ the $\gcd$ of $t$ and $(2mnr)/(ab_2)$, we obtain that $(2mnr)/(ab_2k)$ divides $2n.$ 

If $r/b_2$ is odd we obtain that $(rm)/b_2$ divides $k$ and analogously to  Claim~\ref{fixed-point-subgroup} of Case 4 we obtain that $\phi^{\frac{2mnr}{ab_2}}$ generates the cyclic group of elements fixing pointwise $\alpha_1$.

If $r/b_2$ is even we have that  $(rm)/2b_2$ divides $k$ and we obtain an element fixing pointwise $\alpha_1$ of order $2b_2.$ In this case to generate the cyclic group of elements fixing pointwise $\alpha_1$ we use  $\phi^{\frac{mr}{2b_2}}\gamma^{\frac{-n-sm}{2b_2}}$ (this is well-defined since $2$ divides $r/b_2$, so $r/2b_2$ is an integer).

For the elements fixing pointwise $\alpha_2$ the situation is symmetric.

\end{proof}

Now we can consider various cases: $r$  is odd (and both $r/b_1$ and $r/b_2$ are odd); $r$  is even and exactly one between $r/b_1$ and $r/b_2$ is odd; $r$ is even and both $r/b_1$ and $r/b_2$ are even. In each of these cases we can repeat the strategy used for Family $1'$ and we obtain the results given in Table~\ref{pippo}.
We remark that, since $b_1$ and $b_2$ are coprime, if $r$ is even at least one between  $r/b_1$ and $r/b_2$ is even.

\bigskip
\noindent
\textbf{\large{Case 6}}. $G=\Phi((D^*_{4mr}/C_{2m},D^*_{4nr}/C_{2n})_s)$ (Family $11$) and \\ $G=\Phi((D^*_{2mr}/C_{m},D^*_{2nr}/C_{n})_s)$ (Family $11'$)
\medskip

These groups are the semidirect product  of  the abelian groups in Families $1$ and $1'$ and the group generated by  the involution $\Phi(j,j)$, corresponding to the map $z_1+z_2 j\rightarrow \overline{z_1}+\overline{z_2} j$.  We denote by $A$ the abelian subgroup of index 2 corresponding to $\Phi((C_{2mr}/C_{2m},C_{2nr}/C_{2n})_s)$ or to  $\Phi((C_{mr}/C_{m},C_{nr}/C_{n})_s)$. The element $\Phi(j,j)$ acts by conjugation on $A$ inverting each element, so these groups are generalized dihedral. 

We consider the Hopf fibration on $S^3$: the action of $\Phi(j,j)$ leaves invariant the fibers corresponding to real numbers with respect to the map $(z_1+z_2 j)\rightarrow z_1/z_2$, and 
the involution $\Phi(j,j)$ acts on each of these fibers as a reflection (fixing exactly two points).  The   involution induced by $\Phi(j,j)$ on the base orbifold of the Hopf fibration is  a reflection along a great circle containing the  images of $\alpha_1$ and $\alpha_2$. To better understand  the situation we can consider the quotient of $S^3/G$ as the quotient of $S^3/A$ by the involution induced on $S^3/A$ by $\Phi(j,j)$. The base orbifold of $S^3/G$ is the quotient of the base orbifold of $S^3/A$ by a reflection along a great circle containing the two cone points. We obtain that the base orbifold of $S^3/G$ is a disc with two corner reflectors on the boundary and  the indices of the corner reflectors are the same of the singular points of the base orbifold of $S^3/A$ (see Tables~\ref{pluto} and~\ref{pippo}). By Theorem~\ref{eulero} the Euler number is half the Euler number of $S^3/A.$

The action of $G$ leaves invariant both $T_1$ and $T_2$, the two solid tori defined at the beginning of this section. The involution $\Phi(j,j)$ acts on $\alpha_1$ and $\alpha_2$ (the cores of  the two tori)  as a reflection. 

This implies that  $T_1/G$ and $T_2/G$ are two solid pillows and, since $S^3/G$ can be obtained by gluing the two solid pillows along their boundaries,  the underlying topological space of $S^3/G$ is $S^3.$

By definition, the local invariants of the corner points are the same as the two exceptional fibers of $S^3/A$. 

All these results are collected in Tables~\ref{pluto} and \ref{pippo}.

\begin{table}
\begin{tabular}{|l|l|}
\hline
\multicolumn{2}{|c|}{} \\
\multicolumn{2}{|c|}{$G=\Phi((C_{2mr}/C_{2m},C_{2nr}/C_{2n})_s)$ (Family $1$)}\\
\multicolumn{2}{|c|}{and}\\
\multicolumn{2}{|c|} {$G=\Phi((D^*_{4mr}/C_{2m},D^*_{4nr}/C_{2n})_s)$ (Family $11$)} \\
\multicolumn{2}{|c|}{} \\
\hline
\multicolumn{2}{|c|}{} \\
\multicolumn{2}{|l|}{We define: $h=\gcd(m,n),$ $m'=\frac{m}{h},$ $n'=\frac{n}{h},$}\\
\multicolumn{2}{|l|}{$a=\gcd(n'-sm',m'+sn',2m'n'r),$}\\
\multicolumn{2}{|l|}{  $b_1=\gcd(\frac{n'-sm'}{a},\frac{2m'n'r}{a}),$ $b_2=\gcd(\frac{n'+sm'}{a},\frac{2m'n'r}{a}).$} \\  
\multicolumn{2}{|c|}{} \\

\multicolumn{2}{|l|}{Remark: W.L.O.G. we assume $s$ odd}\\
\multicolumn{2}{|c|}{} \\
 \hline
&\\
if $n'm'$ is even,  we define: & in both cases we define:  \\
$\nu$ minimal positive integer s.t. $\gcd(\frac{n'}{a\nu },a)=1$ &  $d=\frac{\nu^2 a(n'+sm')+2n'm'r}{f_2a\nu b_2}$  \\ 
 $f_1=f_2=1$ & $g=\frac{\nu^2 a(n'-sm')-2n'm'r}{f_1a\nu b_1}$\\
& $f=\frac{2m'n'r}{f_1f_2b_1b_2}$ \\
if $n'm'$ is odd,  we define: &   $\overline{g}$ s.t. $g\overline{g}\equiv 1 \, \mod f$\\
$\nu$  minimal positive integer s.t.  $\gcd(\frac{2n'}{a\nu },\frac{a}{2})=1$  & $\overline{c}$  s.t.  \\
																							$f_i=\left\{\begin{array}{ll}
																							2 &  \text{ if } \frac{r}{b_i} \text{ is even }\\
																							1 &  \text{ if } \frac{r}{b_i} \text{ is odd }
																							\end{array}\right.$ &  $\left(\nu s+r\frac{2n'}{a\nu }\right)\overline{c}\equiv 1 \, \mod n'r$ \\
&  \\
\hline
\multicolumn{2}{|c|}{} \\
\multicolumn{2}{|l|} {The orbifold $S^3/\Phi((C_{2mr}/C_{2m},C_{2nr}/C_{2n})_s)$ fibers over $S^2(nr,nr)$}\\

\multicolumn{2}{|l|} { with local invariants $\frac{d \overline{c}f_2b_2h}{nr}$ and $-\frac{g \overline{c}f_1b_1h}{nr}$ and Euler number $-\frac{2m}{nr}$.}\\
\multicolumn{2}{|l|} {The underlying topological space of $S^3/\Phi((C_{2mr}/C_{2m},C_{2nr}/C_{2n})_s)$ } \\
\multicolumn{2}{|l|} {is the lens space  $ L(f,d\overline{g})$.}\\
\multicolumn{2}{|l|} {The singular set  of $S^3/\Phi((C_{2mr}/C_{2m},C_{2nr}/C_{2n})_s)$ is a link with  } \\ 
\multicolumn{2}{|l|} {at most two components of singular index $f_2b_2 h$ and $f_1b_1 h$}\\
\multicolumn{2}{|l|} {(if the singular index is 1 the corresponding component } \\
\multicolumn{2}{|l|} {consists of non-singular points).}\\
\multicolumn{2}{|c|}{} \\
\hline
\multicolumn{2}{|c|}{} \\
\multicolumn{2}{|l|} {The orbifold $S^3/\Phi((D^*_{4mr}/C_{2m},D^*_{4nr}/C_{2n})_s)$ fibers over $D^2(;nr,nr)$}\\ \multicolumn{2}{|l|} { with local invariants   $\frac{d \overline{c}f_2b_2h}{nr}$ and $-\frac{g \overline{c}f_1b_1h}{nr}$ and Euler number $-\frac{m}{nr}$.}\\
\multicolumn{2}{|l|} {The underlying topological space of $S^3/\Phi((D^*_{4mr}/C_{2m},D^*_{4nr}/C_{2n})_s)$ }\\
\multicolumn{2}{|l|} { is the 3-sphere.}\\
\multicolumn{2}{|c|}{} \\
\hline
\end{tabular}
\bigskip
\caption{Families $1$ and $11$.}
\label{pippo}
\end{table}

\section{The quotient of $S^3$ by the remaining groups}\label{remaining}

We will now consider the remaining groups  of Table~\ref{subgroup} that leave invariant the Hopf fibration. We will treat several examples which explain the general method to compute the classification data for the quotient  fibered orbifold (the fibration is that induced by the Hopf fibration). These results are collected in Table~\ref{topolino}.  We also considered separately the families $G=\Phi(L/L_K,R/R_K)$ and $\overline{G}=\Phi(R/R_K,L/L_K)$, when they do not coincide. In the table, the group $\overline{G}$ appears with the same number as $G$, adding the suffix ``bis''. Due to the previous discussion, there is an orientation-reversing diffeomorphism form $S^3/G$ and $S^3/\overline{G}.$ 
If the action of $\overline{G}$ preserves the Hopf fibration, then the action of $G$ preserves the mirror image of the Hopf fibration, and the invariants for the corresponding Seifert fibration of $S^3/G$ are closely related to those which we describe in cases 7-10 below. We give an example in Subsection~\ref{last}. 
We note that $S^3/G$ (and $S^3/\overline{G}$) may have the structures of a Seifert fibered orbifold in other ways, but that these additional structures would lift to $S^3$ as (classical) Seifert fibrations, having torus knots as generic fibers, and hence projecting to a 2-orbifold of the form $S^2(p,q)$, where $p$ and $q$ are coprime (and $|pq|>1$). This implies that the base 2-orbifold for $S^3/G$ is ``bad" in the sense that its universal cover has non-empty singular set. We will not consider Seifert fibration over bad 2-orbifolds. 
\bigskip
\bigskip

\noindent
\textbf{\large{Case 7}}.  $L$ is cyclic   and $R$ is generalized quaternion.
\medskip

This is the case of Families 2,\,3,\,4 and 34 in the list. We will consider explicitly the examples of Families 2 and 3, namely $G=\Phi((C_{2m}/C_{2m},D_{4n}^*/D_{4n}^*))$ and $G=\Phi((C_{4m}/C_{2m},D_{4n}^*/C_{2n}))$.

Similarly to Case 4 in the previous Section, the first step is to compute the induced action on the base 2-sphere. Since the elements of $L$ are of the form $\cos(\pi/2m)+i \sin(\pi/2m)$ and their induced action on the base 2-sphere is the identity (see the final part of Section \ref{Seifert orbifold}), it suffices to look at the action of elements of $\left\{1\right\}\times R$. We already know that the subgroup $C_{2n}\subset D_{4n}^*$ induces a rotation of order $n$ around an axis (say, the vertical axis) of $S^2$. One can see that the induced actions of the remaining elements (those of the form $\omega j$ for $\omega=\cos(\pi/2n)+i \sin(\pi/2n)$) are maps $\lambda\mapsto-(1/\lambda)\omega^{-2}$, which correspond to $\pi$-rotations around $n$ distinct axes intersecting the equator of $S^2$. This shows that the base orbifold of the quotient $S^3/G$ is $S^2(2,2,n)$. This information enables also to compute the Euler number, by using the naturality property  (see Theorem~\ref{eulero}).

Therefore, it remains to compute the local invariants associated to exceptional fibers, that is, to those fibers with image a singular point of the base orbifold. To do so, we will choose a preimage $\alpha$ in $S^3$ of one such exceptional fiber of $S^3/G$. The local invariant will only depend on the subgroup of $G$ fixing $\alpha$. Of course, the result will not depend on the chosen preimage, as the stabilizers of fibers of $S^3$ which are mapped to the same fiber of $S^3/G$ are conjugate.

In the two particular cases we are considering, there are three exceptional fibers. One projects to the cone point of index $n$; its preimages are the cores $\alpha_1$ and $\alpha_2$ of the solid tori $T_1$ and $T_2$ of $S^3$ in the usual decomposition. It is clear that the subgroup fixing them is $\Phi((C_{2m}/C_{2m},C_{2n}/C_{2n})_1)$ (the subindex $s=1$ will be omitted from now on). By using the results of the previous section, we can find the local invariant associated to the index $n$ cone point, which turns out to be $m/n$. The singularity index of this fiber is $\gcd(m,n)$.

The other two exceptional fibers project to index $2$ singular points. Take one fiber $\beta$ in the preimage of an exceptional fiber. 
Conjugate $G$ by an isometry of the form $\eta=\Phi(1,w_1+w_2j)$ which maps $\beta$ to $\alpha_1$, so that the invariants of $\beta$ equal the invariants of $\alpha_1$ under the action of $\eta G \eta ^{-1}.$
With this procedure, the stabilizer of $\beta$ can be conjugated to obtain the canonical form $(C_{\cdot\cdot}/C_{\cdot},C_{\cdot\cdot}/C_{\cdot})$, for which we know how to find the associated invariants thanks to the previous section.

For the groups in Family 2, that is $G=\Phi((C_{2m}/C_{2m},D_{4n}^*/D_{4n}^*))$, one can recognize that the subgroup leaving invariant any exceptional fiber over an index 2 singular point is conjugate to $\Phi((C_{2m}/C_{2m},C_4/C_4))$. For example, the elements of $R=D_{4n}^*$ fixing the fiber $z_1/z_2=i$ are $\left\{1,j,-1,-j\right\}$ and they are easily conjugated to  $C_4=\left\{1,i,-1,-i\right\}$. According to the results obtained for Family 1, the local invariant is $0/2$ when $m$ is even (this means that the fiber has singularity index 2 but has a trivially fibered neighborhood) and $1/2$ when $m$ is odd. On the other hand, for $G=\Phi((C_{4m}/C_{2m},D_{4n}^*/C_{2n}))$ the stabilizers are conjugate to  $\Phi((C_{4m}/C_{2m},C_4/C_2))$ and thus the local invariants are inverted: $1/2$ for $m$ even and $0/2$ for $m$ odd.

Note that every stabilizer acts on $S^3$ fixing two different fibers (they correspond to the two antipodal points fixed by the rotation on the base orbifold).  For Families 2 and 3, the local invariants associated to the two fibers are equal. This will not always be  the case. Families 4 and 34 are dealt  with the same techniques, by paying attention to the remarks above.

\bigskip

\noindent
\textbf{\large{Case 8}}.  $L$ and $R$ are generalized quaternion
\medskip

These are families of groups containing the groups of Case 7 as index 2 subgroups, listed as 10, 12, 13, 33, $33'$ in the table. 

Compared to Case 7, the additional elements are of the form $(\omega j,\omega' j)$, where $\omega$ and $\omega'$ are roots of unity. Again, we start by considering the induced action on $S^2$. One sees that the induced action for left multiplication by an element $(\omega j,1)$ is the antipodal map. Therefore, when a quaternion of the form $\omega j$ in $L$ is paired to a $\pi$-rotation arising from some $\omega' j$ in $R$, the induced action is reflection in the plane orthogonal to the axis of rotation. The reflection planes for the action of the group may or may not contain the axis of some other $\pi$-rotation. This will depend on the isomorphism
between $L/L_K$ and $R/R_K$.

For instance, we consider  the groups $\Phi((D^*_{4m}/D^*_{4m},D^*_{4n}/D^*_{4n}))$ in Family 10. If $n$ is odd, reflection planes do not contain any axis of $\pi$-rotations and therefore the quotient orbifold is $D^2(2;n)$; if $n$ is even, the quotient is $D^2(;2,2,n)$.

In order to compute local invariants, the procedure is the same as the previous case. Note that when an exceptional fiber projects to a corner reflector of the base orbifold, its local model is a solid pillow, and the stabilizer of one of its preimages in $S^3$ is a dihedral group. However, it suffices to detect the index 2 cyclic subgroup of this dihedral group to obtain the local invariant. Therefore, one can forget about the elements which act on the 2-sphere by reflections: these are exactly those arising from the pairing of some $\omega j$ in $L$ (it induces an antipodal map) to some $\omega' j$ in $R$ (induces a $\pi$-rotation). This shows that the local invariants will match those we obtained for the respective groups of Case~7.

\bigskip

\noindent
\textbf{\large{Case 9}}.  $L$ is generalized quaternion  and $R$ is cyclic 
\medskip

This case covers Families 2bis, 3bis, 4bis, 34bis. The technique is very similar to Case 8, though simpler. Let us look at the induced action on $S^2$. Elements of $R$ act by rotations, while elements of $L$  act either trivially or by the antipodal map. Depending on the pairing, the base orbifold can be some $D^2(n;)$ or $\mathbb{R}P^2(n)$. For example, consider the family 2bis, that is $\Phi((D^*_{4m}/D^*_{4m},C_{2n}/C_{2n}))$. If the order $n$ of the induced rotation of elements of $R$ is even, there is a $\pi$-rotation paired to an antipodal map, giving rise to a reflection in the horizontal plane; otherwise, there is no reflection in the induced action, so the only orientation-reversing maps act freely and the base orbifold is a projective plane with a cone point of order $n$. The local invariant of the only exceptional fiber is again $m/n$, as the stabilizer is $\Phi((C_{2m}/C_{2m},C_{2n}/C_{2n}))$.

\bigskip

In  cases 7-9, the orbifold $S^3/G$ has two different fibrations, one induced from the Hopf fibration and the other from the mirror image of the Hopf fibration. The latter can be recovered by looking at the quotient of $S^3$ (with the Hopf fibration) for the action of $\overline{G}$, and just changing the sign of  Euler number and local invariants due to orientation.

\bigskip
\bigskip

\noindent
\textbf{\large{Case 10}}.  $L$ is cyclic or generalized quaternion and  $R=T^*$, $O^*$, $I^*$
\medskip

This is the case of the remaining groups preserving the Hopf fibration. Note that these groups do not  preserve the mirror image of the Hopf fibration.

The groups of symmetries $T^*$, $O^*$ and $I^*$ act on $S^2$ as one should expect. In particular, $T^*$ has a normal subgroup $D_8^*$ whose action is a special case of $D_{4m}^*$ considered above. Moreover, $T^*$ has threefold axes of rotation, and  can be regarded as the normal subgroup of $O^*$ which leaves invariant a tetrahedron embedded in a cube as in Figure~\ref{cubo-tetraedro}. $O^*$ contains also fourfold axes. $I^*$ has twofold, threefold and fivefold axes.

\begin{figure}[htb]
\begin{center}
\includegraphics[height=3cm]{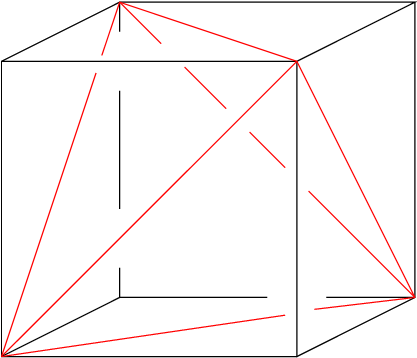}
\end{center}
\caption{A tetrahedron inside a cube}\label{cubo-tetraedro}
\end{figure}

When $L$ is cyclic, it has trivial induced action on the base $S^2$. Therefore, the base orbifold of the quotient is $S^2(2,3,3)$, $S^2(2,3,4)$ and $S^2(2,3,5)$ respectively. It is now easy to find the local stabilizers: take, for example, the groups $\Phi((C_{2m}/C_{2m},O^*/O^*))$ in Family 7. An exceptional fiber corresponding to the index 2 singular point is fixed by a subgroup conjugate to $\Phi((C_{2m}/C_{2m},C_4/C_4))$; a fiber corresponding to index 3 cone point is fixed by a group of the form $\Phi((C_{2m}/C_{2m},C_6/C_6))$ and analogously for the index 4 cone point we get a group  $\Phi((C_{2m}/C_{2m},C_8/C_8))$. On the other hand, for the groups $\Phi((C_{4m}/C_{2m},O^*/T^*))$ in Family 8, stabilizers are respectively $\Phi((C_{4m}/C_{2m},C_4/C_2))$ (the twofold axes of symmetry through edge midpoints of the cube are not a symmetry of the embedded tetrahedron), $\Phi((C_{2m}/C_{2m},C_6/C_6))$ (as the threefold axis is common for the groups $O^*$ and $T^*$) and $\Phi((C_{4m}/C_{2m},C_8/C_4))$ (of the order 4 cyclic subgroup of symmetries of a cube, only the order 2 rotation preserves the tetrahedron). This enables us  to find  the local invariants, which we report in Table \ref{topolino}. When $L=D_{4m}^*$, we have some antipodal maps paired to the order 2 rotations in $O^*$, thus obtaining $D^2(;2,3,4)$ as base 2-orbifold; anyway, the local invariants are obtained in the same way. The case $R=I^*$ is completely analogous.

When $R=T^*$ there are some more cases to be considered,  the base orbifold of the quotient is either  $D^2(;2,3,3)$ or $D^2(3;2)$ depending on the pairing of the elements of $R=T^*$ and of $L=D_{4m}^{*}$ (the 2-orbifolds $D^2(;2,3,3)$ or $D^2(3;2)$ are the two possible quotients of $S^2(2,3,3)$ by a reflection). In any case the method to compute the invariants is similar to the previous family.

\begin{center}
\begin{table}
\begin{tabular}{|l|c|c|c|c|c|}
\hline
  & group & e & base orbifold & invariants & case \\
\hline
 2. & $(C_{2m}/C_{2m},D^*_{4n}/D^*_{4n})$ & $-\frac{m}{n}$ & $S^2(2,2,n)$ & $\frac{m}{n},\frac{m}{2},\frac{m}{2}$ & \\ 
 3. &  $(C_{4m}/C_{2m},D^*_{4n}/C_{2n})$ & $-\frac{m}{n}$ & $S^2(2,2,n)$ & $\frac{m}{n},\frac{m+1}{2},\frac{m+1}{2}$ & \\ 
 4. &  $(C_{4m}/C_{2m},D^*_{8n}/D^*_{4n})$ & $-\frac{m}{2n}$ & $S^2(2,2,2n)$ & $\frac{m+n}{2n},\frac{m}{2},\frac{m+1}{2}$ & \\ 
34. &   $(C_{4m}/C_{m},D^*_{4n}/C_{n})$ & $-\frac{m}{2n}$ & $S^2(2,2,n)$ & $\frac{[(m+n)/2]}{n},\frac{m}{2},\frac{m+1}{2}$ & $m,n$ odd\\
10. &  $(D^*_{4m}/D^*_{4m},D^*_{4n}/D^*_{4n})$ & $-\frac{m}{2n}$ & $D^2(;2,2,n)$ & $\frac{m}{n},\frac{m}{2},\frac{m}{2}$ & $n$ even  \\
 & & & $D^2(2;n)$ & $\frac{m}{n},\frac{m}{2}$ & $n$ odd \\
13.bis &  $(D^*_{4m}/C_{2m},D^*_{8n}/D^*_{4n})$ & $-\frac{m}{2n}$ & $D^2(;2,2,n)$ & $\frac{m}{n},\frac{m}{2},\frac{m}{2}$ & $n$ odd  \\
 & & & $D^2(2;n)$ & $\frac{m}{n},\frac{m}{2}$ & $n$ even \\
 13. &   $(D^*_{8m}/D^*_{4m},D^*_{4n}/C_{2n})$ & $-\frac{m}{2n}$ & $D^2(;2,2,n)$ & $\frac{m}{n},\frac{m+1}{2},\frac{m+1}{2}$ & $n$ even  \\
 & & & $D^2(2;n)$ & $\frac{m}{n},\frac{m+1}{2}$ & $n$ odd \\
33. &  $(D^*_{8m}/C_{2m},D^*_{8n}/C_{2n})_f$ & $-\frac{m}{2n}$ & $D^2(;2,2,n)$ & $\frac{m}{n},\frac{m+1}{2},\frac{m+1}{2}$ & $n$ odd  \\
&  & & $D^2(2;n)$ & $\frac{m}{n},\frac{m+1}{2}$ & $n$ even \\
12. &   $(D^*_{8m}/D^*_{4m},D^*_{8n}/D^*_{4n})$ & $-\frac{m}{4n}$ & $D^2(;2,2,2n)$ & $\frac{m+n}{2n},\frac{m}{2},\frac{m+1}{2}$ & \\ 
$33^{\prime}$. &   $(D^*_{8m}/C_{m},D^*_{8n}/C_{n})_f$ & $-\frac{m}{4n}$ & $D^2(;2,2,n)$ & $\frac{[(m+n)/2]}{n},\frac{m}{2},\frac{m+1}{2}$ & $m,n$ odd \\

 2.bis &  $(D^*_{4m}/D^*_{4m},C_{2n}/C_{2n})$ & $-\frac{m}{n}$ & $D^2(n;)$ & $\frac{m}{n}$ & $n$ even \\
 &  & & $\mathbb{R}P^2(n)$ & $\frac{m}{n}$ & $n$ odd \\
3.bis &  $(D^*_{4m}/C_{2m},C_{4n}/C_{2n})$ & $-\frac{m}{n}$ & $D^2(n;)$ & $\frac{m}{n}$ & $n$ odd \\
 &  & & $\mathbb{R}P^2(n)$ & $\frac{m}{n}$ & $n$ even \\
4.bis & $(D^*_{8m}/D^*_{4m},C_{4n}/C_{2n})$ & $-\frac{m}{2n}$ & $D^2(2n;)$ & $\frac{m+n}{2n}$ & \\
34.bis &  $(D^*_{4m}/C_{m},C_{4n}/C_{n})$ & $-\frac{m}{2n}$ & $D^2(n;)$ & $\frac{[(m+n)/2]}{n}$ & $m,n$ odd \\

 5. & $(C_{2m}/C_{2m},T^*/T^*)$ & $-\frac{m}{6}$ & $S^2(2,3,3)$ & $\frac{m}{2},\frac{m}{3},\frac{m}{3}$ &  \\ 
 6. & $(C_{6m}/C_{2m},T^*/D^*_{8})$ & $-\frac{m}{6}$ & $S^2(2,3,3)$ & $\frac{m}{2},\frac{m+1}{3},\frac{m+2}{3}$ &  \\ 
16.  & $(D^*_{4m}/C_{2m},O^*/T^*)$ & $-\frac{m}{12}$ & $D^2(;2,3,3)$ & $\frac{m}{2},\frac{m}{3},\frac{m}{3}$ &  \\
18. & $(D^*_{12m}/C_{2m},O^*/D^*_{8})$ &  $-\frac{m}{12}$ & $D^2(;2,3,3)$ & $\frac{m}{2},\frac{m+1}{3},\frac{m+2}{3}$ &  \\ 
14.  & $(D^*_{4m}/D^*_{4m},T^*/T^*)$ & $-\frac{m}{12}$ & $D^2(3;2)$ & $\frac{m}{2},\frac{m}{3}$ & \\
 7. & $(C_{2m}/C_{2m},O^*/O^*)$ & $-\frac{m}{12}$ & $S^2(2,3,4)$ & $\frac{m}{2},\frac{m}{3},\frac{m}{4}$ &  \\ 
 8. &  $(C_{4m}/C_{2m},O^*/T^*)$ & $-\frac{m}{12}$ & $S^2(2,3,4)$ & $\frac{m+1}{2},\frac{m}{3},\frac{m+2}{4}$ &  \\ 
15. & $(D^*_{4m}/D^*_{4m},O^*/O^*)$ & $-\frac{m}{24}$ & $D^2(;2,3,4)$ & $\frac{m}{2},\frac{m}{3},\frac{m}{4}$ &  \\ 
17.  &  $(D^*_{8m}/D^*_{4m},O^*/T^*)$ & $-\frac{m}{24}$ & $D^2(;2,3,4)$ & $\frac{m+1}{2},\frac{m}{3},\frac{m+2}{4}$ &  \\ 
9. &  $(C_{2m}/C_{2m},I^*/I^*)$ & $-\frac{m}{30}$ & $S^2(2,3,5)$ & $\frac{m}{2},\frac{m}{3},\frac{m}{5}$ &  \\ 
19. &  $(D^*_{4m}/D^*_{4m},I^*/I^*)$ & $-\frac{m}{60}$ & $D^2(;2,3,5)$ & $\frac{m}{2},\frac{m}{3},\frac{m}{5}$ &  \\ 

\hline
\end{tabular}
\bigskip
\caption{The remaining groups}
\label{topolino}
\end{table}
\end{center}

\subsection{Summary: how to treat any group} 
In summary, these are the steps to check the classification data for the quotient by the action of any group $G$:
\begin{itemize}
\item Compute the induced action on the base 2-sphere and obtain its quotient spherical 2-orbifold, which is the base orbifold of the quotient $S^3/G$. 
\item Consider the total number of elements of $G$ and the order of the induced action on $S^2$, which gives the generic number of fibers of $S^3$ which are identified in $S^3/G$. By applying the naturality property, find the Euler number of $S^3/G$.
\item Choose a preimage $\alpha$ in $S^3$ of every exceptional fiber of the quotient. By conjugating the group (or more simply by considering the structure of the stabilizer of the corresponding point in the base $S^2$) detect the canonical form of the stabilizing subgroup of $\alpha$. Use tables \ref{pluto} or  \ref{pippo} to find the local invariants.
\end{itemize}
Actually, the last step in some cases can be performed by computing whether the preimage $\alpha$ is fixed pointwise by $G$ or not. Indeed, if for example the projection of $\alpha$ corresponds to an index 2 cone point, its local invariant can only be $0/2$ (this happens if $\alpha$ is fixed pointwise) or $1/2$ (if not). However, this procedure does not work in general for  the families  in Case 10.

\subsection{Some interpretation of the results} \label{last}
From these results, following \cite{Dun1}, it is possible to understand the underlying manifold and the singular set  for the fibered orbifolds obtained as a quotient of a fixed group. 

We know that if in the base 2-orbifold there are no boundary components, then the underlying manifold is a Seifert fibered manifold; its invariants can be easily deduced. Indeed the  classical invariant defined by Seifert in \cite{SeT} corresponds to the opposite of 	 the sum of the normalized local invariants and the Euler number. It turns out that the underlying manifolds are spherical and they are well known (see~\cite{O}.) For example, if the underlying manifold has at most two exceptional fibers,  it is a lens space obtained as a gluing of two solid tori which are preimages of two discs in the base orbifold.  The singular set of the orbifold will consist of a finite union of fibers (possibly empty).

When the base orbifold has one boundary component (possibly with some corner reflectors), the underlying space is a lens space and the singular set can be described in terms of rational tangles (see Proposition 2.11 in \cite{Dun1}). However, in this case the fibration does not correspond to a Seifert fibration for manifolds. Indeed, there are one or two singular curves of index 2 which are not fibers. In particular if  there are no cone points the underlying manifold is $S^3.$ 

Let us show two typical examples of how to compute these spaces. 

\begin{Example}{$\Phi((C_{2m}/C_{2m},D^*_{4n}/D^*_{4n}))$}\label{primo esempio}

Consider those groups  $\Phi((C_{2m}/C_{2m},D^*_{4n}/D^*_{4n}))$ (in Family 2) for which $m$ even. We have obtained that the base orbifold is $S^2(2,2,n)$ with local invariants $m/2\equiv 0/2$ for both fibers over index 2 cone points and $m/n$ for the third exceptional fiber, which we will call $\alpha$. While looking at the underlying manifold, we can forget about the fibers with $0/2$ invariant, as they have a trivially fibered neighborhood. On the other hand, let $m/n=m'/n'$ with $m'$ and $n'$ coprime. The fiber $\alpha$ has a fibered solid torus neighborhood which projects to a disc on the base orbifold containing the index $n$ cone point. The complement of this disc is another disc, whose preimage is a trivially fibered solid torus.  The two tori are glued together in such a way that fibers coincide and the underlying topological manifold is a lens space.     By Proposition 2.12 in \cite{Dun1}  the underlying space is the lens space $L(m',-a)$ where $a$ is the inverse of $n'$ $\mod m'$. Note that the two index 2 singular fibers form a 2-component link with  linking number 0, but each index 2 singular fiber has nonzero linking number with the exceptional fiber, whose singularity index is $\gcd(m,n)$.
\end{Example}

\begin{Example}{$\Phi((D^*_{4n}/D^*_{4n},C_{2m}/C_{2m}))$}\label{secondo esempio}

This is Family 2bis with $m$ and $n$ swapped. The quotient orbifolds of this example  are  orientation-reversing diffeomorphic to those in  Example \ref{primo esempio} and the fibration induced by the Hopf fibration here corresponds  to the fibration induced by the mirror image  of the Hopf fibration in the previous example.  
In the quotient, if $m$ is even, we have base orbifold $D^2(m;)$, Euler number $-n/m$ and non-normalized local invariant $n/m$. Let $\beta$ be the fiber projecting to the unique cone point. Let $\mu$ and $\lambda$ a meridian and a longitude of the fibered neighborhood of $\beta$. A generic fiber in a neighborhood of $\beta$ represents a curve $a\mu+m'\lambda$, where $a$ is  defined as in the previous example. This fibered neighborhood projects to a disc, whose complement is a neighborhood of the boundary component. The preimage of the complement is a solid torus where generic fibers on the boundary represent meridians. Since the sum of the local invariants coincides exactly with the opposite of the Euler number (see Proposition~\ref{somma eulero invarianti}), the invariant associated to the boundary component is $\xi=0$ and thus there are two index 2 singular curves bounding an annulus fibered by intervals. Both curves have nonzero linking number with  the exceptional fiber $\beta$. As the underlying space is composed of two solid tori where in the gluing a meridian of the second torus is glued to a $a\mu+m'\lambda$ curve, it can be described as a lens space $L(m',a)$ (matching, up to orientation, the underlying space in the previous example).
\end{Example}

To obtain the fibration induced by the mirror image of the Hopf fibration in the spherical orbifolds of Family 2, it is enough to invert the sign of  Euler number and  local invariant of  the fibrations obtained in Example \ref{secondo esempio}.  
The two examples  show  two different fibrations for the  orbifolds of Family 2, only one of which comes from a Seifert fibration of the underlying manifold. The same phenomenon occurs for all groups in Families 2, 3, 4, 13 and 34.

\subsubsection*{Acknowledgments}
We thank the referee for carefully reading our manuscript and for many helpful comments that improved the presentation of our work.

\end{document}